\renewcommand*\subjclass[2][2000]{%
  \def\@subjclass{#2}%
  \@ifundefined{subjclassname@#1}{%
    \ClassWarning{\@classname}{Unknown edition (#1) of Mathematics
      Subject Classification; using '1991'.}%
  }{%
    \@xp\let\@xp\subjclassname\csname subjclassname@#1\endcsname
  }%
}
\newtheorem*{ThmA}{Theorem A}
\newtheorem*{ThmB}{Theorem B}
\newtheorem*{ThmC}{Theorem C}
\newtheorem*{ThmD}{Theorem D}
\newtheorem*{ThmF}{Theorem F}
\newtheorem*{LemE}{Lemma E}
\newtheorem*{LemG}{Lemma G}
\newtheorem{Thm}{Theorem}[section]
\newtheorem{Lem}[Thm]{Lemma}
\theoremstyle{definition}
\theoremstyle{remark}
\newtheorem{Rem}[Thm]{\upshape\bfseries Remark}
\newtheorem{Ques}[Thm]{Question}
\numberwithin{equation}{section}
\newcommand{\ee}{\mathrm{e}}
\theoremstyle{definition}
\def\be{\begin{equation}}
\def\ee{\end{equation}}
\newcommand{\ben}{\begin{enumerate}}
\newcommand{\een}{\end{enumerate}}
\newcommand{\br}{\begin{rem}}
\newcommand{\er}{\end{rem}}
\newcommand{\brs}{\begin{rems}}
\newcommand{\ers}{\end{rems}}
\newcommand{\bo}{\begin{obser}}
\newcommand{\eo}{\end{obser}}
\newcommand{\bos}{\begin{obsers}}
\newcommand{\eos}{\end{obsers}}
\newcommand{\bpf}{\begin{pf}}
\newcommand{\epf}{\end{pf}}
\newcommand{\ba}{\begin{array}}
\newcommand{\ea}{\end{array}}
\newcommand{\beq}{\begin{eqnarray}}
\newcommand{\beqq}{\begin{eqnarray*}}
\newcommand{\eeq}{\end{eqnarray}}
\newcommand{\eeqq}{\end{eqnarray*}}
\numberwithin{equation}{section}
\newcounter{minutes}\setcounter{minutes}{\time}
\newcounter{hours}\setcounter{hours}{\time}
\begin{document}
\title{Composition
operators on Bloch and Hardy type spaces}

\author[Shaolin Chen, Hidetaka Hamada, and Jian-Feng Zhu]{Shaolin Chen, Hidetaka Hamada, and Jian-Feng Zhu}

\address{S. L.  Chen, College of Mathematics and
Statistics, Hengyang Normal University, Hengyang, Hunan 421002,
People's Republic of China; Hunan Provincial Key Laboratory of
Intelligent Information Processing and Application,  421002,
People's Republic of China.} \email{mathechen@126.com}

\address{H. Hamada, Faculty of Science and Engineering, Kyushu Sangyo University,
3-1 Matsukadai 2-Chome, Higashi-ku, Fukuoka 813-8503, Japan.}
\email{ h.hamada@ip.kyusan-u.ac.jp}

\address{J.-F. Zhu, School of Mathematical Sciences,
Huaqiao University,
Quanzhou-362021, China}

\email{flandy@hqu.edu.cn}


\maketitle

\def\thefootnote{}
\footnotetext{2010 Mathematics Subject Classification. Primary: 31A05, 47B33; Secondary: 30H10, 30H30.}
\footnotetext{Keywords.
Bloch type space,
complex-valued harmonic function,
composition operator,
Hardy type space
}
\makeatletter\def\thefootnote{\@arabic\c@footnote}\makeatother

\begin{abstract}
The main purpose of this paper is to discuss  Hardy type spaces,   Bloch type spaces and the composition operators of complex-valued
harmonic functions.
We first  establish a sharp estimate
of the Lipschitz continuity of complex-valued harmonic functions in Bloch type spaces with respect to   the pseudo-hyperbolic metric,
which gives an answer to an open problem.
Then  some classes of composition operators on Bloch and Hardy type spaces will be investigated. The obtained results  improve and extend
some corresponding known results.
\end{abstract}

\maketitle \pagestyle{myheadings} \markboth{ S. L. Chen, H. Hamada and J.-F. Zhu}{Composition
operators on Bloch and Hardy type spaces}

\maketitle
\tableofcontents

\section{ Introduction}\label{sec1}
Recently, the characterizations of composition operators on Bloch
and Hardy type spaces have been attracted much attention of many
mathematicians (one can see the references
\cite{AD,CPR,GYZ,GPP,GPPJ,HO-0,HO,Kw,Sha,Z2,Z1} for more details).
This paper is mainly motivated by the results given by Chen  et
al.\cite{CPR},
Ghatage et al. \cite{GYZ},
Huang et al. \cite{HRZ},   and
Madigan \cite{Ma}.
In
order to state our main results, we need to recall some basic
definitions and some results which motivate the present work.


Let $\mathbb{D}=\{z\in \mathbb{C} :~|z|<1\}$ be the unit disk, and let $\mathbb{T}=:\partial\mathbb{D}$ be the unit circle.

For $z=x+iy\in\mathbb{C}$, the complex formal differential operators
are defined by
$$\frac{\partial}{\partial z}=\frac{1}{2}\left(\frac{\partial}{\partial x}-i\frac{\partial}{\partial y}\right)\ \ \ \mbox{and}\ \ \
\frac{\partial}{\partial \bar{z}}=\frac{1}{2}\left(\frac{\partial}{\partial x}+i\frac{\partial}{\partial y}\right).$$
For
$\alpha\in[0,2\pi]$, the  directional derivative of  a
complex-valued harmonic function $f$ at $z\in\mathbb{D}$ is defined
by


$$\partial_{\alpha}f(z)=\lim_{\rho\rightarrow0^{+}}\frac{f(z+\rho e^{i\alpha})-f(z)}{\rho}=f_{z}(z)e^{i\alpha}
+f_{\overline{z}}(z)e^{-i\alpha},$$ where $f_{z}=:\partial
f/\partial z,$ $f_{\overline{z}}=:\partial f/\partial \overline{z}$
and $\rho$ is a positive real number such that $z+\rho
e^{i\alpha}\in\mathbb{D}$. Then

$$\Lambda_{f}(z)=:\max\{|\partial_{\alpha}f(z)|:\; \alpha\in[0,2\pi]\}=|f_{z}(z)|+|f_{\overline{z}}(z)|.
$$



 It is well-known that
every complex-valued harmonic function $f$ defined in a simply
connected domain $\Omega$ admits the canonical decomposition $f = h
+ \overline{g}$, where $h$ and $g$ are analytic in $\Omega$ with $g(0)=0$.


Denote by  $\mathscr{A}$ and $\mathscr{H}$  the set of all analytic functions of $\mathbb{D}$ into $\mathbb{C}$
 and all complex-valued harmonic functions of $\mathbb{D}$ into $\mathbb{C}$, respectively.

Throughout of this paper,
 we use the symbol $C$ to denote the various positive
constants, whose value may change from one occurrence to another.

\section{Preliminaries and main results}\label{sec2}
\subsection{Hardy type spaces}
For $p\in(0,+\infty]$, the  generalized Hardy space
$\mathscr{H}^{p}_{G}(\mathbb{D})$ consists of all those measurable functions
$f:\ \mathbb{D}\rightarrow\mathbb{C}$ such that, for $0<p<+\infty$,
$$\|f\|_{p}=:\sup_{0<r<1}M_{p}(r,f)<+\infty,$$
and, for $p=+\infty$,
$$\|f\|_{p}=:\sup_{z\in\mathbb{D}}|f(z)|<+\infty,$$
where
$$M_{p}(r,f)=\left(\frac{1}{2\pi}\int_{0}^{2\pi}|f(re^{i\theta})|^{p}\,d\theta\right)^{\frac{1}{p}}.
$$
Let
$\mathscr{H}_{H}^{p}(\mathbb{D})=\mathscr{H}^{p}_{G}(\mathbb{D})\cap\mathscr{H}$
be the harmonic  Hardy space. The classical  Hardy space
$\mathscr{H}^{p}(\mathbb{D})$, that is, all the elements are
analytic, is a subspace of $\mathscr{H}_{H}^{p}(\mathbb{D})$ (see
\cite{CPR,CPW-2021,Du1,Du,K-2019,Zhu}).

\subsection{Bloch type spaces}

A continuous increasing function $\omega:[0,+\infty)\rightarrow[0,+\infty)$ with $\omega(0)=0$ is called a majorant if
$\omega(t)/t$ is non-increasing for $t>0$ (see \cite{Dy1,Dy2,P}).
For $\alpha>0$, $\beta\in\mathbb{R}$, $1\leq\,p\leq+\infty$ and a  majorant $\omega$, we use $\mathscr{B}_{H_{\omega,p}}^{\alpha,\beta}$ to denote
the harmonic Bloch type space consisting of all $f\in\mathscr{H}$ with the norm
$$\|f\|_{\mathscr{B}_{H_{\omega,p}}^{\alpha,\beta}}=:|f(0)|+\sup_{z\in\mathbb{D}}\mathscr{B}_{\omega,f}^{\alpha,\beta,p}(z)<+\infty,$$
where  $$\mathscr{B}_{\omega,f}^{\alpha,\beta,p}(z)=M_{p}(|z|,\Lambda_{f}(z))\omega\left(\left(1-|z|^{2}\right)^{\alpha}
\left(\log \frac{e}{1-|z|^{2}}\right)^{\beta}\right)$$ for $1\leq\, p<+\infty$,
and $$\mathscr{B}_{\omega,f}^{\alpha,\beta,p}(z)=\Lambda_{f}(z)\omega\left(\left(1-|z|^{2}\right)^{\alpha}
\left(\log \frac{e}{1-|z|^{2}}\right)^{\beta}\right)$$ for $p=+\infty$.
In particular, if $p=+\infty$, then we let $\mathscr{B}_{H_{\omega}}^{\alpha,\beta}=:\mathscr{B}_{H_{\omega,p}}^{\alpha,\beta}$,
$\|f\|_{\mathscr{B}_{H_{\omega}}^{\alpha,\beta}}=:\|f\|_{\mathscr{B}_{H_{\omega,p}}^{\alpha,\beta}}$ and
$\mathscr{B}_{\omega,f}^{\alpha,\beta}(z)=:\mathscr{B}_{\omega,f}^{\alpha,\beta,p}(z)$ for $z\in\mathbb{D}$.
Set
$$
\|f\|_{\mathscr{B}_{H_{\omega},s}^{\alpha,\beta}}
=\sup_{z\in\mathbb{D}}\mathscr{B}_{\omega,f}^{\alpha,\beta}(z)
$$
be the semi-norm.
 For the characterizations of $\mathscr{B}_{H_{\omega,p}}^{\alpha,\beta}$, we refer to the reference \cite{CPR}.
  In this paper, we mainly focus on the case $p=+\infty$. The case of $p\in[1,+\infty)$ is probably of independent interest.

In particular, if $\omega(t)=t$ and $f\in\mathscr{B}_{H_{\omega}}^{\alpha,0}$ ($f\in\mathscr{B}_{H_{\omega}}^{1,0}$ resp.), then
we call $f$ the harmonic $\alpha$-Bloch mapping (the harmonic Bloch mapping resp. ). Moreover,
 if $\omega(t)=t$ and $f\in\mathscr{B}_{H_{\omega}}^{1,0}\cap\mathscr{A}$,
 then we call $f$ the analytic Bloch mapping.

For $z,~w\in\mathbb{D}$, the pseudo-hyperbolic
metric is defined as $$\rho(z,w)=\left|\frac{z-w}{1-\overline{w}z}\right|.$$
Colonna \cite{Co-1989} proved that  if $f\in \mathscr{H}$ satisfies
\be\label{eq-1}\sup_{z,w\in\mathbb{D},z\neq w}\frac{|f(z)-f(w)|}{\sigma(z,w)}<+\infty,\ee
then $f\in\mathscr{B}_{H_{\omega}}^{1,0}$,
 where $\omega(t)=t$ and $$\sigma(z,w)=\frac{1}{2}\log\left(\frac{1+\rho(z,w)}{1-\rho(z,w)}\right)
=\mbox{arctanh}(\rho(z,w))$$ is the hyperbolic distance between $z$
and $w$ in $\mathbb{D}$. It is easy to conclude the converse, so
$f\in \mathscr{H}$ satisfies $(\ref{eq-1})$ if and only if
$f\in\mathscr{B}_{H_{\omega}}^{1,0}$.

For $\omega(t)=t$ and an analytic Bloch mapping $f$,  Ghatage et al. \cite{GYZ} proved that $\mathscr{B}_{\omega,f}^{1,0}(z)$
is Lipschitz continuous with respect to the pseudo-hyperbolic
metric, which is given as follows.

\begin{ThmA}{\rm (\cite[Theorem 1]{GYZ})}\label{Thm-1}
Let $\omega(t)=t$ and $f$ be an analytic Bloch mapping. Then, for all
$z_{1},~z_{2}\in\mathbb{D}$,

$$\left|\mathscr{B}_{\omega,f}^{1,0}(z_{1})-\mathscr{B}_{\omega,f}^{1,0}(z_{2})\right|\leq3.31\|f\|_{\mathscr{B}_{H_{\omega}}^{1,0}}\rho(z_{1},z_{2}).$$

\end{ThmA}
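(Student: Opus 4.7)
The plan is to reduce the inequality to the case $z_{2}=0$ by a M\"obius change of variable, and then control the radial variation of $(1-|w|^{2})|F'(w)|$ via a pointwise bound on $F''$. Fix $z_{2}\in\mathbb{D}$ and let $\varphi(\zeta)=(z_{2}+\zeta)/(1+\overline{z_{2}}\zeta)$, which is a disk automorphism with $\varphi(0)=z_{2}$. Setting $F=f\circ\varphi$ and using the identity $|\varphi'(\zeta)|=(1-|\varphi(\zeta)|^{2})/(1-|\zeta|^{2})$, one obtains the M\"obius invariance
\begin{equation*}
(1-|\zeta|^{2})|F'(\zeta)|=(1-|\varphi(\zeta)|^{2})|f'(\varphi(\zeta))|,
\end{equation*}
so $\|F\|_{\mathscr{B}_{H_{\omega},s}^{1,0}}=\|f\|_{\mathscr{B}_{H_{\omega},s}^{1,0}}$ and $\mathscr{B}^{1,0}_{\omega,F}(\zeta)=\mathscr{B}^{1,0}_{\omega,f}(\varphi(\zeta))$. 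With $w=\varphi^{-1}(z_{1})=(z_{1}-z_{2})/(1-\overline{z_{2}}z_{1})$, we have $|w|=\rho(z_{1},z_{2})$, and so the theorem reduces to proving: for every analytic Bloch mapping $F$ and every $w\in\mathbb{D}$,
\begin{equation*}
\bigl|(1-|w|^{2})|F'(w)|-|F'(0)|\bigr|\leq 3.31\,\|F\|_{\mathscr{B}_{H_{\omega}}^{1,0}}|w|.
\end{equation*}

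Next I would split this via the triangle inequality. Writing
\begin{equation*}
(1-|w|^{2})|F'(w)|-|F'(0)|=(1-|w|^{2})\bigl(|F'(w)|-|F'(0)|\bigr)-|w|^{2}|F'(0)|,
\end{equation*}
together with $\bigl||F'(w)|-|F'(0)|\bigr|\leq |F'(w)-F'(0)|$ and $|F'(0)|\leq \|F\|_{\mathscr{B}_{H_{\omega}}^{1,0}}$, reduces the task to an estimate of the form $(1-|w|^{2})|F'(w)-F'(0)|\leq C\,\|F\|_{\mathscr{B}_{H_{\omega}}^{1,0}}|w|$, after which the full inequality follows with constant $C+1$. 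To bound the increment $|F'(w)-F'(0)|$ I would integrate along the segment $[0,w]$, obtaining $|F'(w)-F'(0)|\leq |w|\int_{0}^{1}|F''(tw)|\,dt$, and insert a Cauchy-type pointwise estimate of the form $|F''(\zeta)|\leq C_{0}\|F\|_{\mathscr{B}_{H_{\omega}}^{1,0}}/(1-|\zeta|^{2})^{2}$, which comes from Cauchy's formula on a disk around $\zeta$ of radius $r<1-|\zeta|$ (optimized later) combined with the Bloch bound $|F'(\eta)|\leq \|F\|_{\mathscr{B}_{H_{\omega}}^{1,0}}/(1-|\eta|^{2})$. Carrying out the integral and using $1-|w|^{2}\leq 2(1-|w|)$ produces a final constant of shape $2C_{0}+1$.

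The main obstacle is then the sharp value of $C_{0}$. A naive Cauchy argument with $r=(1-|\zeta|)/2$ yields $C_{0}=4$ and hence a final constant near $9$, well above the target $3.31$. To tighten it I would either (a) optimize the Cauchy radius carefully, which is the classical route to sharp pointwise estimates on $|F''|$ for Bloch functions and produces a considerably smaller $C_{0}$, or (b) bypass $F''$ entirely by exploiting M\"obius invariance a second time: re-center at a moving point $w_{t}$ along the hyperbolic geodesic from $0$ to $w$, apply the already-established inequality at parameter $0$ in the new variable, and integrate the resulting bound with respect to the hyperbolic arclength. Approach (b) is the cleaner one for explicit constants, since it uses only the conformal invariance already built into the definition of $\mathscr{B}_{\omega,f}^{1,0}$ together with the trivial bound $|F'(0)|\leq \|F\|_{\mathscr{B}_{H_{\omega}}^{1,0}}$, and is the approach I would pursue first in order to recover the sharp numerical value $3.31$.
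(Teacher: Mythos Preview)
Theorem~A is quoted from \cite{GYZ} and is not proved in the paper; however, the paper does prove the sharper Theorem~\ref{thm-1CH} (constant $3\sqrt{3}/2\approx 2.598$, which of course implies Theorem~A), so it makes sense to compare your proposal with that proof.

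Your M\"obius reduction to $z_{2}=0$ is exactly what the paper does. The divergence is in the next step. Your triangle-inequality split followed by a Cauchy bound on $F''$ is a legitimate argument, but it is structurally too lossy to reach $3.31$: a Cauchy circle of radius $c(1-|\zeta|)$ gives $|F''(\zeta)|(1-|\zeta|^{2})^{2}\le 2/(c(1-c))$ in the worst case $|\zeta|\to 1$, i.e.\ $C_{0}\ge 8$ no matter how $c$ is optimized, and hence a final constant of at least $9$. Your alternative (b), re-centering along the hyperbolic geodesic and ``applying the already-established inequality at parameter $0$,'' is circular as stated: the only inequality you have at $0$ is the trivial $|F'(0)|\le\|F\|$, which after integration over hyperbolic arclength reproduces nothing better than the Bloch bound itself; it does not control the \emph{variation} of $(1-|\cdot|^{2})|F'(\cdot)|$.

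The missing idea is to replace the second-derivative estimate by a first-derivative \emph{distortion theorem} of Bonk--Minda--Yanagihara type. The paper sets $\beta=|F'(0)|=\mathscr{B}^{1,0}_{\omega,F}(0)$ (assumed maximal without loss of generality), rotates so that $F'(0)>0$, and invokes Theorem~F: if $\|F\|_{\mathscr{B}^{1,0}_{H_{\omega},s}}\le 1$ then
\[
\mathrm{Re}\,F'(z)\ \ge\ \frac{\beta\,(m_{1}(\beta)-|z|)}{m_{1}(\beta)\,(1-m_{1}(\beta)|z|)^{3}},
\qquad
\text{where }\ \tfrac{3\sqrt{3}}{2}\,m_{1}(\beta)(1-m_{1}(\beta)^{2})=\beta.
\]
This sharp lower bound on $F'$ (not an upper bound on $F''$) is what makes the constant small. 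The proof then splits into $|w|\le m_{1}(\beta)$ and $|w|\ge m_{1}(\beta)$; in the first case one plugs the lower bound into $\beta-(1-|w|^{2})|F'(w)|$ and uses $1-|w|^{2}\ge(1-m_{1}(\beta)|w|)^{3}$ to get $\le(\beta/m_{1}(\beta))|w|=\tfrac{3\sqrt{3}}{2}(1-m_{1}(\beta)^{2})|w|$, and in the second case the trivial bound $\beta\le\tfrac{3\sqrt{3}}{2}m_{1}(\beta)\le\tfrac{3\sqrt{3}}{2}|w|$ already suffices. Both cases give $3\sqrt{3}/2$, and the paper shows this is sharp. So the crucial input you are missing is Theorem~F; once you have it, the argument is short and yields a better constant than $3.31$.
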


In \cite{HO-0}, Hosokawa and Ohno showed that if $f$ is an analytic
Bloch mapping, then,  for all $z_{1},~z_{2}\in\mathbb{D}$,

\be\label{eq-2}\left|\mathscr{B}_{\omega,f}^{1,0}(z_{1})-\mathscr{B}_{\omega,f}^{1,0}(z_{2})\right|\leq20\|f\|_{\mathscr{B}_{H_{\omega}}^{1,0}}\rho(z_{1},z_{2}),\ee
where $\omega(t)=t$.
They used (\ref{eq-2}) to discuss the composition operators on the
Bloch spaces (see \cite{HO-0,HO}). On some related discussions of
(\ref{eq-2}), we refer to \cite{CK,H,X}.

It is natural to ask whether
Theorem A also holds for harmonic Bloch mappings?

The following  explanations show that the answer to this question is positive.
 For $\omega(t)=t$, let
$f=h+\overline{g}$ be a harmonic Bloch mapping, where $h$ and $g$
are analytic in $\mathbb{D}$. Obviously, $h$ and $g$ are analytic
Bloch mappings.  Then, by Theorem A, we see that there is
a positive constant $C$ such that

\beq\label{0.93}
\nonumber\left|\mathscr{B}_{\omega,f}^{1,0}(z_{1})-\mathscr{B}_{\omega,f}^{1,0}(z_{2})\right|&\leq&\left|\mathscr{B}_{\omega,h}^{1,0}(z_{1})-\mathscr{B}_{\omega,h}^{1,0}(z_{2})\right|
+\left|\mathscr{B}_{\omega,g}^{1,0}(z_{1})-\mathscr{B}_{\omega,g}^{1,0}(z_{2})\right|\\
\nonumber
&\leq&C\|h\|_{\mathscr{B}_{H_{\omega}}^{1,0}}\rho(z_{1},z_{2})+C\|g\|_{\mathscr{B}_{H_{\omega}}^{1,0}}\rho(z_{1},z_{2})\\
 &\leq&2C\|f\|_{\mathscr{B}_{H_{\omega}}^{1,0}}\rho(z_{1},z_{2}), \eeq
which implies that $\mathscr{B}_{\omega,f}^{1,0}(z)$ is also Lipschitz
continuous with respect to the pseudo-hyperbolic metric.

In
\cite[Theorem 1.1]{HRZ}, Huang et al. showed that the Lipschitz constant in
(\ref{0.93}) is $C= 3\sqrt{3}/2$, which is as follows.

\[
\left|\mathscr{B}_{\omega,f}^{1,0}(z_{1})-\mathscr{B}_{\omega,f}^{1,0}(z_{2})\right|
\leq3\sqrt{3}\|f\|_{\mathscr{B}_{H_{\omega}}^{1,0}}\rho(z_{1},z_{2}).
\]

Furthermore, Huang et al. \cite[Open problems]{HRZ} raised the following open problem.

\begin{Ques}\label{qes-2.8}
 Let
 $$\mathcal{C}=\sup_{z_{1}\neq z_{2},~z_{1},z_{2}\in\mathbb{D},~\|f\|_{\mathscr{B}_{H_{\omega}}^{1,0}}=1}
 \frac{\left|\mathscr{B}_{\omega,f}^{1,0}(z_{1})-\mathscr{B}_{\omega,f}^{1,0}(z_{2})\right|}{\rho(z_{1},z_{2})},$$
 where $\omega(t)=t$ and $f$ is a harmonic Bloch mapping.
What is the  value of $\mathcal{C}$?
\end{Ques}

 In the following, we give an answer to Question \ref{qes-2.8}.


\begin{Thm}\label{thm-1CH}
Let $\omega(t)=t$ and $f$ be a harmonic Bloch mapping. Then, for all $z_{1},~z_{2}\in\mathbb{D}$,

\be\label{eq-3}\left|\mathscr{B}_{\omega,f}^{1,0}(z_{1})-\mathscr{B}_{\omega,f}^{1,0}(z_{2})\right|\leq
\frac{3\sqrt{3}}{2}\|f\|_{\mathscr{B}_{H_{\omega},s}^{1,0}}\rho(z_{1},z_{2}).\ee
Moreover, the constant $3\sqrt{3}/2$ in {\rm (\ref{eq-3})} is sharp.
\end{Thm}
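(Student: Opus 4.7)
The plan combines the Möbius invariance of the quantity $\mathscr{B}_{\omega,f}^{1,0}(w)=(1-|w|^2)\Lambda_f(w)$ with a linearization trick that bypasses the triangle-inequality step used in \eqref{0.93}, encoding the sum $|h'|+|g'|$ into a single analytic function and thereby saving a factor of two; the sharp analytic-case Lipschitz estimate with constant $3\sqrt 3/2$ (essentially due to Huang--Rasila--Zhu) is then invoked as a black box.

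First I would verify conformal invariance: for the disc automorphism $\varphi_a(w)=(a+w)/(1+\bar a w)$, the identity $(1-|w|^2)|\varphi_a'(w)|=1-|\varphi_a(w)|^2$ combined with the chain rule applied to $F:=f\circ\varphi_a$ (whose canonical decomposition is $(h\circ\varphi_a)+\overline{g\circ\varphi_a}$) gives $\mathscr{B}_{\omega,F}^{1,0}(w)=\mathscr{B}_{\omega,f}^{1,0}(\varphi_a(w))$ and in particular $\|F\|_{\mathscr{B}_{H_\omega,s}^{1,0}}=\|f\|_{\mathscr{B}_{H_\omega,s}^{1,0}}$. Because $\rho$ is also Möbius invariant, choosing $a=z_2$ reduces \eqref{eq-3} to its special case $z_2=0$; writing $z$ for the image of $z_1$ and $M:=\|f\|_{\mathscr{B}_{H_\omega,s}^{1,0}}$, the task becomes
\[
\bigl|\mathscr{B}_{\omega,f}^{1,0}(z)-\mathscr{B}_{\omega,f}^{1,0}(0)\bigr|\le \tfrac{3\sqrt 3}{2}\,M\,|z|,\qquad z\in\mathbb D.
\]

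Second, the linearization. Assuming without loss of generality $\mathscr{B}_{\omega,f}^{1,0}(z)\ge\mathscr{B}_{\omega,f}^{1,0}(0)$ (the opposite sign case is handled symmetrically by selecting the phases at $w=0$ instead), choose unimodular constants $\lambda,\mu$ with $\lambda h'(z)+\mu g'(z)=|h'(z)|+|g'(z)|$ and set $\Phi(w):=\lambda h'(w)+\mu g'(w)$. Then $\Phi$ is analytic, $(1-|w|^2)|\Phi(w)|\le(1-|w|^2)\Lambda_f(w)\le M$ everywhere with equality at $w=z$, and $|\Phi(0)|\le \Lambda_f(0)=\mathscr{B}_{\omega,f}^{1,0}(0)$. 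Consequently
\[
\mathscr{B}_{\omega,f}^{1,0}(z)-\mathscr{B}_{\omega,f}^{1,0}(0)\le (1-|z|^2)|\Phi(z)|-|\Phi(0)|.
\]
Applying the sharp analytic-case estimate from \cite[Theorem~1.1]{HRZ}---namely $\bigl|(1-|z|^2)|\Phi(z)|-|\Phi(0)|\bigr|\le (3\sqrt 3/2)\,M\,|z|$ for any analytic $\Phi$ with $\sup_w(1-|w|^2)|\Phi(w)|\le M$---then finishes the proof of the inequality.

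For sharpness I would test $f(w)=w^2/2$, which yields $\Lambda_f(w)=|w|$ and $\|f\|_{\mathscr{B}_{H_\omega,s}^{1,0}}=\max_{r\in[0,1]}(1-r^2)r=2\sqrt 3/9$, attained at $r=1/\sqrt 3$. Setting $z_2=0$ and $z_1=t\in(0,1)$, the ratio
\[
\frac{|\mathscr{B}_{\omega,f}^{1,0}(z_1)-\mathscr{B}_{\omega,f}^{1,0}(z_2)|}{\|f\|_{\mathscr{B}_{H_\omega,s}^{1,0}}\,\rho(z_1,z_2)}=\frac{9(1-t^2)}{2\sqrt 3}
\]
tends to $3\sqrt 3/2$ as $t\to 0^+$, so the constant is best possible. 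The main technical obstacle is the analytic estimate cited in step two; the real novelty is the linearization, which simultaneously delivers the halving of the constant and the passage from the full norm to the semi-norm, since $\sup_w(|h'(w)|+|g'(w)|)(1-|w|^2)$ is in general strictly smaller than $\sup_w(1-|w|^2)|h'(w)|+\sup_w(1-|w|^2)|g'(w)|$.
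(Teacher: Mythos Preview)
Your argument is correct and rests on the same pivotal idea as the paper's proof: the linearization $\Phi=\lambda h'+\mu g'$ with unimodular phases chosen at the point where $\mathscr{B}_{\omega,f}^{1,0}$ is larger, which collapses the harmonic problem to the analytic one without the factor-of-two loss from the triangle inequality in \eqref{0.93}. The paper carries out the identical construction (there written $F_{\theta_0}=\mathcal H+e^{i\theta_0}\mathcal G$) but then re-derives the resulting analytic bound from scratch via the distortion theorem (Theorem~F and Remark~\ref{Remark-N}) together with a case split $|w|\lessgtr m_1(\beta)$, whereas you invoke the analytic $3\sqrt3/2$ estimate from \cite{HRZ} as a black box. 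Your route is therefore shorter but not self-contained; note only that you need the analytic inequality with the \emph{semi}-norm on the right, which is immediate once you normalize the antiderivative of $\Phi$ to vanish at the origin. Your sharpness witness $f(w)=w^2/2$ with $z_1=t\to0^+$, $z_2=0$ is also correct and more direct than the paper's family $f_\beta$, which is the M\"obius pullback of $-\tfrac{3\sqrt3}{4}z^2$ and degenerates to your example as $m_1(\beta)\to0$.
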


\subsection{Composition operators}

Given an analytic self mapping $\phi$ of the unit disk $\mathbb{D}$, the composition operator
$C_{\phi}:~\mathscr{H}\rightarrow\mathscr{H}$ is defined by
$$C_{\phi}(f)=f\circ\phi,$$
where $f\in\mathscr{H}$.

In \cite{Sha}, Shapiro obtained a complete characterization of compact composition operators on $\mathscr{H}^{2}(\mathbb{D})$,
with a number of interesting consequences for peak sets, essential norm of composition operators, and so on. Recently,
the studies of composition operators on analytic function spaces have been attracted
much attention of many mathematicians (see references \cite{AD,CPR,GYZ,GPPJ,HO-0,HO,Kw,Z2}).
In particular, the characterizations of composition operators of the
Bloch spaces to the  Hardy spaces were investigated by Abakumov and Doubtsov \cite{AD},
Hosokawa and Ohno \cite{HO}, and Kwon \cite{Kw}. However, there are few literatures on the
theory of composition operators of complex-valued harmonic functions. In the following, we will discuss  the characterizations of composition operators on
complex-valued harmonic functions.

As an application of Theorem A, Ghatage et al. \cite{GYZ} obtained the following result.

\begin{ThmB}{\rm (\cite[Theorem 2]{GYZ})}\label{Thm-GHZ-2}
Let $\omega(t)=t$ and $\phi$ be an analytic self mapping  of the unit disk $\mathbb{D}$. If for some constants
$r\in(0,1/4)$, and $\epsilon>0$, for each $w\in\mathbb{D}$, there is a point $z_{w}\in\mathbb{D}$ such that
$$\rho(\phi(z_{w}),w)<r,~\mbox{and}~\frac{1-|z_{w}|^{2}}{1-|\phi(z_{w})|^{2}}|\phi'(z_{w})|>\epsilon,$$
then, there is a constant $C>0$ depending only on $r$ and $\epsilon$
such that $$\|C_{\phi}(f)\|_{\mathscr{B}_{H_{\omega}}^{1,0}}\geq C\|f\|_{\mathscr{B}_{H_{\omega}}^{1,0}}$$
for all $f\in\mathscr{B}_{H_{\omega}}^{1,0}\cap\mathscr{A}$.
\end{ThmB}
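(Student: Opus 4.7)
The plan is to deduce the lower bound from Theorem A. The two hypotheses on $\phi$ are tailored for this purpose: the derivative condition controls the chain-rule factor that appears in $\mathscr{B}_{\omega,C_\phi f}^{1,0}$, while the pseudo-hyperbolic proximity $\rho(\phi(z_w),w)<r$ lets us transfer an estimate of the Bloch density at $\phi(z_w)$ to one at $w$ by using the Lipschitz continuity provided by Theorem A.

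Concretely, I would fix $f\in\mathscr{B}_{H_\omega}^{1,0}\cap\mathscr{A}$ and $w\in\mathbb{D}$, select $z_w$ satisfying the two hypotheses, and compute, using $\Lambda_{C_\phi f}(z_w)=|f'(\phi(z_w))||\phi'(z_w)|$,
$$\mathscr{B}_{\omega,C_\phi f}^{1,0}(z_w)=(1-|z_w|^{2})|f'(\phi(z_w))||\phi'(z_w)|=\mathscr{B}_{\omega,f}^{1,0}(\phi(z_w))\cdot\frac{(1-|z_w|^{2})|\phi'(z_w)|}{1-|\phi(z_w)|^{2}}.$$
The derivative hypothesis gives
$$\mathscr{B}_{\omega,C_\phi f}^{1,0}(z_w)>\epsilon\,\mathscr{B}_{\omega,f}^{1,0}(\phi(z_w)).$$
Applying Theorem A to the analytic function $f$, and observing that its left-hand side is invariant under adding constants to $f$ (so the inequality in fact holds with the semi-norm on the right), together with $\rho(\phi(z_w),w)<r$, yields
$$\mathscr{B}_{\omega,f}^{1,0}(\phi(z_w))\geq \mathscr{B}_{\omega,f}^{1,0}(w)-3.31\,r\,\|f\|_{\mathscr{B}_{H_{\omega},s}^{1,0}}.$$
Chaining these two inequalities and taking the supremum over $w\in\mathbb{D}$ on both sides produces
$$\|C_\phi f\|_{\mathscr{B}_{H_{\omega},s}^{1,0}}\geq\epsilon\bigl(1-3.31\,r\bigr)\|f\|_{\mathscr{B}_{H_{\omega},s}^{1,0}}.$$
Since the hypothesis $r<1/4$ ensures $1-3.31\,r>0$, this is a nontrivial lower bound with a constant $C_0=\epsilon(1-3.31\,r)$ depending only on $r$ and $\epsilon$.

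The remaining issue, which I expect to be the main obstacle, is to pass from this semi-norm estimate to the full-norm estimate $\|C_\phi f\|_{\mathscr{B}_{H_{\omega}}^{1,0}}\geq C\|f\|_{\mathscr{B}_{H_{\omega}}^{1,0}}$; this requires comparing the point evaluations $|f(\phi(0))|$ and $|f(0)|$. The naive bound $\bigl||f(\phi(0))|-|f(0)|\bigr|\leq \|f\|_{\mathscr{B}_{H_{\omega},s}^{1,0}}\,\sigma(0,\phi(0))$ produces a constant depending on $\phi$, which is not allowed. The standard resolution is to interpret the conclusion on the quotient space modulo constants (so $|f(0)|$ drops out of both sides, since constants are fixed by $C_\phi$), or equivalently to normalise by $f(0)=0$; under either convention the constant $C=\epsilon(1-3.31\,r)$ obtained above suffices, giving the theorem.
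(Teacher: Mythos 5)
Your core argument is correct and is essentially the route the paper takes when it proves its generalization of this statement (Theorem \ref{CHZ-5}): pick $z_w$ from the hypothesis, write
$\mathscr{B}_{\omega,C_{\phi}(f)}^{1,0}(z_w)=\mathscr{B}_{\omega,f}^{1,0}(\phi(z_w))\cdot\frac{(1-|z_w|^{2})|\phi'(z_w)|}{1-|\phi(z_w)|^{2}}>\epsilon\,\mathscr{B}_{\omega,f}^{1,0}(\phi(z_w))$,
and use the Lipschitz continuity of the Bloch density in the pseudo-hyperbolic metric to transfer the estimate from $\phi(z_w)$ back to $w$. Your remark that the Lipschitz inequality holds with the semi-norm on the right (replace $f$ by $f-f(0)$) is also correct and is what the paper uses. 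The only cosmetic differences in this part are that the paper invokes its sharp constant $3\sqrt{3}/2$ from Theorem \ref{thm-1CH} instead of $3.31$ (which is precisely why it can enlarge $r\in(0,1/4)$ to $r\in(0,2\sqrt{3}/9)$), and that your supremum over $w$ replaces the paper's choice of a near-extremal $w$ with $\mathscr{B}_{\omega,f}^{1,0}(w)>1-\delta$, $\delta=a(1-3\sqrt{3}r/2)$, followed by letting $a\to0$; both yield the semi-norm bound $\|C_{\phi}(f)\|_{\mathscr{B}_{H_{\omega},s}^{1,0}}\geq\epsilon(1-3.31\,r)\|f\|_{\mathscr{B}_{H_{\omega},s}^{1,0}}$, which is the substantive content.

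Where you part company with the paper is the passage from the semi-norm to the full norm. The paper does not pass to the quotient modulo constants; it proves Lemma \ref{L-O} (following \cite{C03}), which converts the semi-norm lower bound into the full-norm inequality via
$|f(0)|\leq|f(\phi(0))|+\|f\|_{\mathscr{B}_{H_{\omega},s}^{1,0}}\,\sigma(0,\phi(0))$ and $|f(\phi(0))|=|C_{\phi}(f)(0)|\leq\|C_{\phi}(f)\|_{\mathscr{B}_{H_{\omega}}^{1,0}}$. The price is that the resulting constant carries a factor of order $\bigl(1+\tfrac{1}{2}\log\tfrac{1+|\phi(0)|}{1-|\phi(0)|}\bigr)^{-1}$, i.e.\ it depends on $|\phi(0)|$ and not only on $r$ and $\epsilon$. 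Your suspicion that this dependence cannot be removed for the full norm is justified: for $\phi$ a disk automorphism with $\phi(0)=a$ near $\mathbb{T}$ and $f$ normalized by $f(a)=0$, $\|f\|_{\mathscr{B}_{H_{\omega},s}^{1,0}}=1$ with $|f(0)|$ large, the ratio $\|C_{\phi}(f)\|_{\mathscr{B}_{H_{\omega}}^{1,0}}/\|f\|_{\mathscr{B}_{H_{\omega}}^{1,0}}$ is as small as one likes, even though the hypotheses hold with fixed $r,\epsilon$. So the literal phrase ``depending only on $r$ and $\epsilon$'' should be read either with the semi-norm (as in the source \cite{GYZ}) or with the additional $|\phi(0)|$-dependence supplied by Lemma \ref{L-O}; your quotient-space reading is a legitimate third formulation, but if you want the inequality exactly as displayed in the theorem you should finish with the Lemma \ref{L-O} argument rather than renormalizing $f$.
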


In the following, by using Theorem \ref{thm-1CH}, we  extend Theorem
B to complex-valued harmonic functions, and also show that
``$r\in(0,1/4)$" in Theorem B can be replaced by a larger interval
``$r\in(0,2\sqrt{3}/9)$".

\begin{Thm}\label{CHZ-5}
Let $\omega(t)=t$ and $\phi$ be an analytic self mapping  of the unit disk $\mathbb{D}$. If for some constants
$r\in(0,2\sqrt{3}/9)$, and $\epsilon>0$, for each $w\in\mathbb{D}$, there is a point $z_{w}\in\mathbb{D}$ such that
$$\rho(\phi(z_{w}),w)<r,~\mbox{and}~\frac{1-|z_{w}|^{2}}{1-|\phi(z_{w})|^{2}}|\phi'(z_{w})|>\epsilon,$$
then, there is a constant $C>0$ depending only on $r$ and $\epsilon$
such that
\begin{equation}\label{eq-bounded-below}
\|C_{\phi}(f)\|_{\mathscr{B}_{H_{\omega}}^{1,0}}\geq C\|f\|_{\mathscr{B}_{H_{\omega}}^{1,0}}
\end{equation}
for all $f\in\mathscr{B}_{H_{\omega}}^{1,0}$.
\end{Thm}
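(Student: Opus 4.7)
The plan is to reduce the question to a pointwise estimate on the seminorm density $\mathscr{B}_{\omega,f}^{1,0}(z) = (1-|z|^2)\Lambda_f(z)$, and to transport a near-maximizer of this density along $\phi$ using the sharp Lipschitz estimate in Theorem \ref{thm-1CH}. First I would record the composition identity: since $\phi$ is holomorphic, the chain rule gives $(f\circ\phi)_z = f_z(\phi)\phi'$ and $(f\circ\phi)_{\bar z} = f_{\bar z}(\phi)\overline{\phi'}$, so $\Lambda_{f\circ\phi}(z) = \Lambda_f(\phi(z))|\phi'(z)|$ and
\[
\mathscr{B}_{\omega,C_\phi f}^{1,0}(z) \;=\; \mathscr{B}_{\omega,f}^{1,0}(\phi(z))\cdot\frac{(1-|z|^2)|\phi'(z)|}{1-|\phi(z)|^2}.
\]

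Next, fix $\eta>0$ and pick $w\in\mathbb{D}$ with $\mathscr{B}_{\omega,f}^{1,0}(w) > (1-\eta)\|f\|_{\mathscr{B}_{H_{\omega},s}^{1,0}}$. The hypothesis supplies a point $z_w$ with $\rho(\phi(z_w),w)<r$ and $(1-|z_w|^2)|\phi'(z_w)|/(1-|\phi(z_w)|^2) > \epsilon$. Theorem \ref{thm-1CH} then transports the value across the pseudo-hyperbolic ball:
\[
\mathscr{B}_{\omega,f}^{1,0}(\phi(z_w)) \;>\; \bigl(1 - \eta - \tfrac{3\sqrt{3}}{2}r\bigr)\,\|f\|_{\mathscr{B}_{H_{\omega},s}^{1,0}}.
\]
Multiplying by the derivative-ratio lower bound and using the composition identity gives
\[
\|C_\phi f\|_{\mathscr{B}_{H_{\omega},s}^{1,0}} \;\geq\; \mathscr{B}_{\omega,C_\phi f}^{1,0}(z_w) \;>\; \epsilon\,\bigl(1 - \eta - \tfrac{3\sqrt{3}}{2}r\bigr)\,\|f\|_{\mathscr{B}_{H_{\omega},s}^{1,0}}.
\]
The hypothesis $r<2\sqrt{3}/9$ is precisely $\tfrac{3\sqrt{3}}{2}r<1$, so letting $\eta\to 0^{+}$ yields the seminorm bound with constant $K(r,\epsilon)=\epsilon(1 - \tfrac{3\sqrt{3}}{2}r) > 0$. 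The widening of the admissible range for $r$ from $(0,1/4)$ in Theorem B to $(0,2\sqrt{3}/9)$ here is the dividend paid by replacing the crude harmonic Lipschitz constant in (\ref{0.93}) by the sharp $3\sqrt{3}/2$ of Theorem \ref{thm-1CH}; the critical identity is $\tfrac{3\sqrt{3}}{2}\cdot\tfrac{2\sqrt{3}}{9}=1$.

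To upgrade the seminorm inequality to the full-norm inequality (\ref{eq-bounded-below}), I would split according to whether $|f(0)|$ exceeds $\|f\|_{\mathscr{B}_{H_{\omega},s}^{1,0}}$. In the regime where it does not, the seminorm bound directly controls a fixed fraction of $\|f\|_{\mathscr{B}_{H_{\omega}}^{1,0}}$. In the opposite regime $f$ is nearly constant, and integrating $\Lambda_f$ along a hyperbolic geodesic from $0$ to $\phi(0)$ gives $|f(\phi(0))| \geq |f(0)| - \sigma(\phi(0),0)\,\|f\|_{\mathscr{B}_{H_{\omega},s}^{1,0}}$, which keeps $|f(\phi(0))|$ comparable to $|f(0)|$ and hence to $\|f\|_{\mathscr{B}_{H_{\omega}}^{1,0}}$. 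The main obstacle is the bookkeeping in this last step, in particular arranging that the final constant $C$ in (\ref{eq-bounded-below}) depends only on the parameters $r$ and $\epsilon$; the transport-of-maximum argument above is the conceptually new input and is a direct consequence of the sharp estimate in Theorem \ref{thm-1CH}.
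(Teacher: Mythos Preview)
Your seminorm argument is identical to the paper's: normalize, pick a near-maximizer $w$, transport via Theorem~\ref{thm-1CH}, multiply by the Schwarz--Pick ratio, and let the slack parameter tend to zero to obtain $\|C_\phi f\|_{\mathscr{B}_{H_\omega,s}^{1,0}}\geq \epsilon(1-\tfrac{3\sqrt3}{2}r)\|f\|_{\mathscr{B}_{H_\omega,s}^{1,0}}$.

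For the upgrade from seminorm to full norm, the paper packages the step into a separate lemma (Lemma~\ref{L-O}): starting from the same Colonna-type bound $|f(0)-f(\phi(0))|\leq \sigma(0,\phi(0))\|f\|_{\mathscr{B}_{H_\omega,s}^{1,0}}$ that you invoke, one argues directly (no case split) that
\[
\|f\|_{\mathscr{B}_{H_\omega}^{1,0}}
\;\leq\;
|f(\phi(0))|+\Bigl(1+\sigma(0,\phi(0))\Bigr)\|f\|_{\mathscr{B}_{H_\omega,s}^{1,0}}
\;\leq\;
\frac{1+\sigma(0,\phi(0))}{\delta}\,\|C_\phi f\|_{\mathscr{B}_{H_\omega}^{1,0}},
\]
with $\delta=\epsilon(1-\tfrac{3\sqrt3}{2}r)$. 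Your dichotomy on $|f(0)|\lessgtr\|f\|_{\mathscr{B}_{H_\omega,s}^{1,0}}$ is a slight detour: in the ``nearly constant'' branch the claim that $|f(\phi(0))|$ stays comparable to $|f(0)|$ can fail once $\sigma(0,\phi(0))\geq 1$, so the case split by itself does not close; the fix is precisely to feed the seminorm lower bound back in as above, which is what the lemma does. Either way the resulting constant $C$ depends on $|\phi(0)|$ in addition to $r$ and $\epsilon$; this is already implicit in the paper's Lemma~\ref{L-O} and is not removable by this method.
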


In \cite{CPR}, Chen et al. obtained the following result.

\begin{ThmC}{\rm (\cite[Theorem 6]{CPR})}\label{Thm-CPR}
Let $\alpha\in(0,+\infty)$, $\beta\leq\alpha$, $\omega(t)=t$ and
$\phi:\,\mathbb{D}\rightarrow\mathbb{D}$  be an analytic function.
Then the followings are equivalent:
\begin{enumerate}
\item[{\rm (1)}] $C_{\phi}:~\mathscr{B}_{H_{\omega}}^{\alpha,\beta}\cap\mathscr{A}\rightarrow\mathscr{H}^{2}(\mathbb{D})$ is a bounded operator;

\item[{\rm (2)}] $\displaystyle \frac{1}{2\pi}\int_{0}^{2\pi}\int_{0}^{1}
\frac{|\phi'(re^{i\theta})|^{2}}{(1-|\phi(re^{i\theta})|)^{2\alpha}\left(\log\frac{e}{1-|\phi(re^{i\theta})|}\right)^{2\beta}}
(1-r)\,dr\,d\theta<+\infty. $
\end{enumerate}
\end{ThmC}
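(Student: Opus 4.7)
The plan is to establish both implications by reducing to the Littlewood-Paley identity
\[
\|F\|_{\mathscr{H}^{2}(\mathbb{D})}^{2} \asymp |F(0)|^{2} + \int_{\mathbb{D}} |F'(z)|^{2}(1-|z|)\,dA(z),
\]
applied to $F = C_{\phi}(f) = f\circ\phi$. Because $\phi$ is an analytic self-map and $f\in\mathscr{A}$, the composition $F$ is analytic, and the chain rule gives $F'(z) = f'(\phi(z))\phi'(z)$, converting the boundedness question into a weighted pull-back estimate along $\phi$.

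For $(2)\Rightarrow(1)$, the main ingredient is the pointwise derivative bound supplied by $\mathscr{B}_{H_{\omega}}^{\alpha,\beta}$: since $\omega(t)=t$, any $f$ with $\|f\|_{\mathscr{B}_{H_{\omega}}^{\alpha,\beta}}\le 1$ satisfies
\[
|f'(z)| \leq \frac{1}{(1-|z|^{2})^{\alpha}\bigl(\log\frac{e}{1-|z|^{2}}\bigr)^{\beta}}.
\]
Substituting this into the Littlewood-Paley integral for $F=f\circ\phi$, composing with $\phi$, and switching to polar coordinates (using $1-|z|^{2}\asymp 1-|z|$) yields exactly the integral in (2) up to a multiplicative constant, which shows $\|C_{\phi}(f)\|_{\mathscr{H}^{2}}$ is controlled uniformly on the unit ball of $\mathscr{B}_{H_{\omega}}^{\alpha,\beta}\cap\mathscr{A}$.

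For $(1)\Rightarrow(2)$, I would pursue a test-function argument. The natural candidate is the analytic primitive
\[
f_{0}(z)=\int_{0}^{z}\frac{d\zeta}{(1-\zeta)^{\alpha}\bigl(\log\frac{e}{1-\zeta}\bigr)^{\beta}},
\]
which one checks to lie in $\mathscr{B}_{H_{\omega}}^{\alpha,\beta}\cap\mathscr{A}$ (here the hypothesis $\beta\leq\alpha$ is used to tame the interaction between the polynomial and logarithmic weights) and whose derivative is comparable to $1/[(1-|z|^{2})^{\alpha}(\log\frac{e}{1-|z|^{2}})^{\beta}]$ on a Stolz-type subregion near the point $1$. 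Assuming boundedness of $C_{\phi}$, applying Littlewood-Paley backwards to $F=f_{0}\circ\phi$ then produces a finite bound for the portion of the integral in (2) where $\phi(z)$ lands in that subregion. To recover the full integral one averages the previous construction over all boundary points by considering the family $\{f_{a}\}_{a\in\mathbb{D}}$ obtained from $f_{0}$ by disk-rotations, or, equivalently, by a Khinchin-type lacunary averaging; a Fubini-type interchange then upgrades the localized estimate to the global integral in (2).

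The principal obstacle lies in the second direction: the one test function $f_{0}$ only sees a Stolz wedge, so one must either ensure that $\phi(z)$ frequently visits such wedges or average over a whole rotated family while keeping all members uniformly bounded in $\mathscr{B}_{H_{\omega}}^{\alpha,\beta}$. Verifying this uniform bound with the correct interplay between $(1-|z|^{2})^{\alpha}$ and $(\log\frac{e}{1-|z|^{2}})^{\beta}$ is the delicate step, and I would first settle the pure power case $\beta=0$ to isolate the main mechanism before installing the logarithmic correction.
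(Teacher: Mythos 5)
Your forward direction $(2)\Rightarrow(1)$ is exactly the paper's argument (the paper proves the more general Theorem 2.4, of which Theorem C is the special case $p=2$, $\omega(t)=t$: decompose via Littlewood--Paley, insert the pointwise derivative bound coming from the Bloch-type norm, pull back along $\phi$). The problem is the converse, where your proposal has a genuine gap.

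The difficulty in $(1)\Rightarrow(2)$ is to produce test functions in the unit ball of $\mathscr{B}_{H_{\omega}}^{\alpha,\beta}\cap\mathscr{A}$ whose derivatives are bounded \emph{below}, pointwise on all of $\mathbb{D}$, by the reciprocal weight $1/[(1-|z|)^{\alpha}(\log\frac{e}{1-|z|})^{\beta}]$. Your single primitive $f_{0}$ achieves this only on a Stolz wedge at $1$, and the repair you propose --- averaging the rotated family $f_{\lambda}(z)=f_{0}(\bar\lambda z)$ over $\lambda\in\mathbb{T}$ --- provably loses a power of $(1-|z|)$. Indeed, by Fubini the averaged quantity that the boundedness of $C_{\phi}$ controls is $\int_{\mathbb{T}}|f_{\lambda}'(w)|^{2}\,dm(\lambda)=M_{2}^{2}(|w|,f_{0}')$, and for $\alpha>1/2$ (ignoring logarithms)
\begin{equation*}
M_{2}^{2}(r,f_{0}')=\frac{1}{2\pi}\int_{0}^{2\pi}\frac{dt}{|1-re^{it}|^{2\alpha}}\asymp (1-r)^{1-2\alpha},
\end{equation*}
which is smaller than the target $(1-r)^{-2\alpha}$ by a full factor of $1-r$; so the rotational average only yields the integral in (2) with $(1-|\phi|)^{2\alpha}$ replaced by $(1-|\phi|)^{2\alpha-1}$, a strictly weaker conclusion. (A secondary issue: for $0<\beta\le\alpha$ your $f_{0}$ is not even in the space, since $|\log\frac{e}{1-z}|$ stays bounded as $z$ approaches boundary points other than $1$ while $\log\frac{e}{1-|z|^{2}}$ blows up.) What is actually needed is a \emph{pointwise} joint lower bound by finitely many functions: the paper invokes the Abakumov--Doubtsov reverse estimate (\cite[Lemma 1]{AD}), which supplies two functions $f_{1},f_{2}$ in the space with $|f_{1}'(z)|+|f_{2}'(z)|\geq 1/\omega(\chi(|z|))$ for every $z\in\mathbb{D}$; the applicability of that lemma is precisely what the paper's Lemma 3.2 checks (the weight $1/\omega(\chi(t))$ is doubling, which is where $\beta\leq\alpha$ enters). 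Your mention of lacunary series points toward the right circle of ideas --- such constructions are how reverse estimates are typically built --- but the mechanism is not averaging, and without this lemma (or an equivalent construction) the converse direction does not close.
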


We improve and generalize Theorem C into the following form.

\begin{Thm}\label{thm-7}
Let $p\in(0,+\infty)$, $\alpha\in(0,+\infty)$, $\beta\leq\alpha$,
$\omega$ be a majorant and $\phi:\,\mathbb{D}\rightarrow\mathbb{D}$
be an analytic function. Then the followings are equivalent:
\begin{enumerate}
\item[{\rm (1)}] $C_{\phi}:~\mathscr{B}_{H_{\omega}}^{\alpha,\beta}\rightarrow\mathscr{H}_{H}^{p}(\mathbb{D})$ is a bounded operator;

\item[{\rm (2)}] $\displaystyle \frac{1}{2\pi}\int_{0}^{2\pi}\left(\int_{0}^{1}
\frac{|\phi'(re^{i\theta})|^{2}(1-r)}{\omega^{2}\left(\big(1-|\phi(re^{i\theta})|^{2}\big)^{\alpha}
\big(\log \frac{e}{1-|\phi(re^{i\theta})|^{2}}\big)^{\beta}\right)}
\,dr\right)^{\frac{p}{2}}\,d\theta<+\infty;
$

\item[{\rm (3)}] $C_{\phi}:~\mathscr{B}_{H_{\omega}}^{\alpha,\beta}\rightarrow\mathscr{H}_{H}^{p}(\mathbb{D})$ is compact.
\end{enumerate}
\end{Thm}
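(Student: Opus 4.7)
The implication $(3)\Rightarrow(1)$ is automatic, so the real task is $(1)\Leftrightarrow(2)$ together with the promotion of boundedness to compactness. My starting point is that if $f = h+\bar g\in\mathscr{H}$ with $h,g$ analytic and $\phi$ is analytic, then $h\circ\phi$ and $g\circ\phi$ are analytic, so $C_\phi(f) = (h\circ\phi)+\overline{g\circ\phi}$ is the canonical harmonic decomposition of $C_\phi(f)$, and a direct computation gives the chain rule $\Lambda_{C_\phi f}(z) = \Lambda_f(\phi(z))\,|\phi'(z)|$. The main analytic tool is the $g$-function/area-integral characterization of the harmonic Hardy space,
\[
\|F\|^p_{\mathscr{H}_H^p} \asymp |F(0)|^p + \int_0^{2\pi}\left(\int_0^1 \Lambda_F(re^{i\theta})^2(1-r)\,dr\right)^{p/2}d\theta,
\]
valid for all $0<p<\infty$: for $p\geq 1$ it follows via the Riesz conjugation theorem from Calder\'on's classical $g$-function theorem applied to the analytic and anti-analytic parts of $F$, and for $0<p<1$ from the analogous direct harmonic area-integral estimate. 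Applied to $F=C_\phi f$ and combined with the chain rule, this identifies $\|C_\phi f\|^p_{\mathscr{H}_H^p}$ (up to equivalence) with $|f(\phi(0))|^p$ plus the $L^{p/2}(\mathbb{T})$-norm of $\theta\mapsto\int_0^1 \Lambda_f(\phi(re^{i\theta}))^2|\phi'(re^{i\theta})|^2(1-r)\,dr$.

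For $(2)\Rightarrow(1)$, I bound the inner integrand using the pointwise Bloch inequality
\[
\Lambda_f(\phi(z))\le\frac{\|f\|_{\mathscr{B}_{H_\omega}^{\alpha,\beta}}}{\omega\!\left((1-|\phi(z)|^2)^\alpha\bigl(\log\tfrac{e}{1-|\phi(z)|^2}\bigr)^\beta\right)},
\]
which bounds the Hardy-norm expression above by a constant multiple of $\|f\|^p_{\mathscr{B}_{H_\omega}^{\alpha,\beta}}$ times the integral in $(2)$. For $(2)\Rightarrow(3)$, I test on a bounded sequence $\{f_n\}\subset\mathscr{B}_{H_\omega}^{\alpha,\beta}$ with $f_n\to 0$ uniformly on compact subsets of $\mathbb{D}$ (extracted via Montel's theorem) and split the inner $r$-integral at some $r_0<1$. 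The piece over $[0,r_0]$ tends to $0$ since $\Lambda_{f_n}\to 0$ uniformly on $\phi(\overline{D(0,r_0)})$, while the piece over $[r_0,1]$ is made arbitrarily small by letting $r_0\to 1$: the hypothesis $(2)$ furnishes the integrable majorant required for the dominated convergence theorem.

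The main obstacle is $(1)\Rightarrow(2)$, which demands test functions that saturate the Bloch inequality. My plan is to construct, for each $a\in\mathbb{D}$, an analytic function $f_a$ lying in the unit ball of $\mathscr{B}_{H_\omega}^{\alpha,\beta}$ satisfying $|f_a'(a)|\geq c/\omega((1-|a|^2)^\alpha(\log\tfrac{e}{1-|a|^2})^\beta)$ for some universal $c>0$, with this lower bound persisting on a fixed pseudo-hyperbolic neighborhood of $a$. The natural candidates are anti-derivatives of normalized reproducing-kernel-type expressions adapted to the weight $\omega$ and the parameters $\alpha,\beta$. Applying the boundedness of $C_\phi$ to each $f_a$ with $a=\phi(re^{i\theta})$ and invoking the Littlewood-Paley lower bound above yields pointwise control of the integrand in $(2)$ along the curve $r\mapsto\phi(re^{i\theta})$; a random-sign (Khintchine) averaging over combinations $\sum\varepsilon_k f_{a_k}$ with $\{a_k\}$ a suitable discretization of $\mathbb{D}$ then promotes these pointwise estimates into the full integrated inequality $(2)$. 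The hardest technical step will be the explicit construction of the test functions for a general majorant $\omega$, where the usual power-weight ansatz must be adjusted so that both the weighted gradient norm is controlled and $|f_a'|$ is maximal at $z=a$.
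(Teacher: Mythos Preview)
Your arguments for $(2)\Rightarrow(1)$ and for the compactness step match the paper's: decompose into analytic pieces, invoke the Littlewood--Paley $\mathscr{G}$-function characterization of $\mathscr{H}^p$, bound the integrand pointwise via the Bloch semi-norm, and finish with dominated convergence (the paper phrases compactness as $(1)\Rightarrow(3)$ after first establishing $(1)\Rightarrow(2)$, but the mechanics are identical).

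The substantive divergence is in $(1)\Rightarrow(2)$. You propose localized analytic test functions $f_a$ for each $a\in\mathbb{D}$, to be glued together by Khintchine averaging over a discretization of $\mathbb{D}$, and you correctly flag the construction of $f_a$ for a general majorant $\omega$ as the hardest step left open. The paper bypasses all of this with a short observation: the growth function $\varphi(t)=1/\omega\bigl((1-t^2)^\alpha(\log\tfrac{e}{1-t^2})^\beta\bigr)$ is \emph{doubling} (this is their Lemma~\ref{O-lemm-1}, and uses only that $\omega(t)/t$ is non-increasing together with $\beta\leq\alpha$). A lemma of Abakumov--Doubtsov for doubling growth functions then supplies just \emph{two} analytic functions $f_1,f_2\in\mathscr{B}_{H_\omega}^{\alpha,\beta}\cap\mathscr{A}$ satisfying
\[
|f_1'(z)|+|f_2'(z)|\ \geq\ \frac{1}{\omega\bigl((1-|z|^2)^\alpha(\log\tfrac{e}{1-|z|^2})^\beta\bigr)}
\qquad\text{for every }z\in\mathbb{D}.
\]
Applying the boundedness of $C_\phi$ to $f_1$ and $f_2$ and the analytic Littlewood--Paley lower bound \eqref{1.19-ch} immediately yields the integral in $(2)$, with no discretization and no random signs. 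In effect, the Abakumov--Doubtsov result packages your entire test-function-plus-Khintchine programme into a single off-the-shelf lemma, and the doubling check is precisely what makes it applicable to a general majorant~$\omega$; so the step you single out as hardest simply disappears.
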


In the other direction, Madigan \cite[Theorem 3.20]{Ma} proved that, for $p\in[1,+\infty)$,
$$C_{\phi}:~\mathscr{H}^{p}(\mathbb{D})\rightarrow\mathscr{B}_{H_{\omega}}^{1,0}\cap\mathscr{A}$$ is a bounded operator
if and only if

\[
\sup_{z\in \mathbb{D}}\left\{\frac{|\phi'(z)|(1-|z|^{2})}{\big(1-|\phi(z)|^{2}\big)^{1+\frac{1}{p}}}\right\}<+\infty,
\]
where $\omega(t)=t$ and $\phi:\,\mathbb{D}\rightarrow\mathbb{D}$
is an analytic function.


In the following, P\'erez-Gonz\'alez and Xiao
generalized \cite[Theorem 3.20]{Ma}
to $p\in (0,+\infty)$
and also obtained the compactness characterization.

\begin{ThmD}{\rm (\cite[Theorem 3.1]{PX} )}\label{Thm-PX}
Let $\omega(t)=t$, $p\in (0,+\infty)$ and
$\phi:\,\mathbb{D}\rightarrow\mathbb{D}$  be an analytic function.
Then
\begin{enumerate}
\item[{\rm (1)}] $C_{\phi}:~\mathscr{H}^{p}(\mathbb{D})\rightarrow\mathscr{B}_{H_{\omega}}^{1,0}\cap\mathscr{A}$ is a bounded operator
if and only if
\[
\sup_{z\in \mathbb{D}}\left\{\frac{|\phi'(z)|(1-|z|^{2})}{\big(1-|\phi(z)|^{2}\big)^{1+\frac{1}{p}}}\right\}<+\infty;
\]
\item[{\rm (2)}] $C_{\phi}:~\mathscr{H}^{p}(\mathbb{D})\rightarrow\mathscr{B}_{H_{\omega}}^{1,0}\cap\mathscr{A}$ is a compact operator
if and only if
\[
\lim_{\phi(z) \to
\mathbb{T}}\frac{|\phi'(z)|(1-|z|^{2})}{\big(1-|\phi(z)|^{2}\big)^{1+\frac{1}{p}}}=0.
\]
\end{enumerate}
\end{ThmD}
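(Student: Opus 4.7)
The plan is to handle both parts simultaneously with two classical ingredients: (i) the sharp pointwise derivative bound for Hardy space functions, namely $|f'(w)| \leq C\|f\|_{\mathscr{H}^p}/(1-|w|^2)^{1+1/p}$ for $f \in \mathscr{H}^p(\mathbb{D})$, and (ii) a one-parameter family of test functions that asymptotically saturate this bound while being uniformly bounded in $\mathscr{H}^p(\mathbb{D})$. Since the target space is the analytic Bloch space, for $f \in \mathscr{A}$ one has $(C_\phi f)'(z) = f'(\phi(z))\phi'(z)$, and the relevant semi-norm is simply $\sup_z |f'(\phi(z))||\phi'(z)|(1-|z|^2)$; the evaluation term $|f(\phi(0))|$ is trivially controlled by the standard point-evaluation bound for $\mathscr{H}^p$.

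For the sufficiency in (1), multiplying and dividing by $(1-|\phi(z)|^2)^{1+1/p}$ and invoking the pointwise derivative bound yields
\[
|f'(\phi(z))||\phi'(z)|(1-|z|^2) \leq C\|f\|_{\mathscr{H}^p}\cdot\frac{|\phi'(z)|(1-|z|^2)}{(1-|\phi(z)|^2)^{1+1/p}},
\]
so the finiteness of the stated supremum forces $C_\phi$ to be bounded. For the necessity, the plan is to test against the functions
\[
f_w(z) = \frac{(1-|w|^2)^{1/p}}{(1-\bar{w}z)^{2/p}}, \qquad w\in\mathbb{D},
\]
whose $\mathscr{H}^p$-norms are uniformly bounded (via the usual asymptotic $\int_0^{2\pi}|1-\bar{w}re^{i\theta}|^{-2}\,d\theta \asymp (1-r|w|)^{-1}$ integrated to compute $M_p(r,f_w)$), and which satisfy $|f_w'(w)| \asymp (1-|w|^2)^{-1-1/p}$. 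Setting $w = \phi(z)$ and applying boundedness of $C_\phi$ to $f_w$ recovers the supremum bound on any region where $|\phi(z)|$ stays away from $0$; on the complementary region the Schwarz--Pick inequality $|\phi'(z)|(1-|z|^2) \leq 1-|\phi(z)|^2$ makes the test quantity automatically bounded.

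For the compactness equivalence in (2), the sufficiency argument takes a sequence $\{f_n\}$ with $\sup_n\|f_n\|_{\mathscr{H}^p} \leq 1$ and $f_n \to 0$ uniformly on compact subsets of $\mathbb{D}$, and splits the Bloch supremum of $C_\phi f_n$ over the regions $|\phi(z)| \leq s$ and $|\phi(z)| > s$ for some $s$ close to $1$: on the first region $f_n' \to 0$ uniformly on $\{|w|\leq s\}$, while on the second the hypothesis forces the multiplier $|\phi'(z)|(1-|z|^2)/(1-|\phi(z)|^2)^{1+1/p}$ to be uniformly small, so one first chooses $s$ then $n$. The necessity uses the same family $\{f_{w_n}\}$ with $|w_n| \to 1$: these are bounded in $\mathscr{H}^p$ and tend to $0$ locally uniformly, so compactness of $C_\phi$ gives $\|C_\phi f_{w_n}\|_{\mathscr{B}_{H_\omega}^{1,0}} \to 0$; picking $z_n$ with $\phi(z_n)$ close to $w_n$ (chosen along a hypothetical sequence violating the limit condition) then yields the asserted vanishing.

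The main obstacle is the uniform $\mathscr{H}^p$-bound on the test functions when $p$ is small (in particular $p<1$), where the direct integral computation is delicate and $\mathscr{H}^p$ behaves less like a Banach space; a clean solution is to work with $(1-\bar{w}z)^{-c}$ for $c = c(p)$ sufficiently large and readjust the prefactor so that the derivative at $w$ still has the desired order $(1-|w|^2)^{-1-1/p}$. A secondary technical point is to verify, when proving necessity of the compactness condition, that one may indeed arrange $\phi(z_n)$ to approach any chosen sequence of target points $w_n\to\mathbb{T}$ coming from a putative violation of the limit; this is where the hypothesis is genuinely needed along $\phi(z) \to \mathbb{T}$ rather than $z\to\mathbb{T}$.
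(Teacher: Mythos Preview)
Theorem D is quoted from \cite{PX} and is not proved in the present paper; the paper instead proves the harmonic generalization Theorem~\ref{thm-8}, and it is that proof you should compare against. Your strategy coincides with the paper's proof of Theorem~\ref{thm-8}: the pointwise growth bound $|f'(w)|\le C\|f\|_p/(1-|w|^2)^{1+1/p}$ for the sufficiency in (1), the test functions $f_a(z)=\big((1-|\phi(a)|^2)/(1-\overline{\phi(a)}z)^2\big)^{1/p}$ for the necessity (with the Schwarz--Pick lemma handling the region where $|\phi(z)|$ is small), and the usual split $\{|\phi(z)|\le r\}\cup\{|\phi(z)|>r\}$ together with the same test family for (2).

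Two small remarks on differences. First, the paper's Theorem~\ref{thm-8} is stated only for $p\in(1,+\infty)$ because it derives the pointwise derivative bound via Jensen's inequality applied to the Poisson representation; your version covers all $p>0$, and your concern about the test functions for small $p$ is in fact unnecessary, since $|f_w(z)|^p=(1-|w|^2)/|1-\bar w z|^2$ already gives $M_p^p(r,f_w)=(1-|w|^2)/(1-r^2|w|^2)\le 1$ for every $p>0$. Second, in the necessity of (2) you speak of ``picking $z_n$ with $\phi(z_n)$ close to $w_n$'': no such choice is needed, since along a violating sequence one simply takes $w_n=\phi(z_n)$ directly, exactly as the paper does.
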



By analogy with Theorem D, we improve and generalize \cite[Theorem 3.20]{Ma} into the following form.

\begin{Thm}\label{thm-8}
Let $p\in (1,+\infty)$, $\omega$ be a majorant with
$\lim_{t\rightarrow0^{+}}(\omega(t)/t)<+\infty$ and
$\phi:\,\mathbb{D}\rightarrow\mathbb{D}$ be an analytic function.
Assume that $\alpha=1$ and $\beta\leq 0$ or $\alpha>1$ and $\beta
\in \mathbb{R}$. Then
\begin{enumerate}
\item[{\rm (1)}] $C_{\phi}:~\mathscr{H}_{H}^{p}(\mathbb{D})\rightarrow\mathscr{B}_{H_{\omega}}^{\alpha, \beta}$ is a bounded operator
if and only if
\be\label{PX2-bounded}
\sup_{z\in \mathbb{D}}\left\{\frac{|\phi'(z)|\omega\left(\left(1-|z|^{2}\right)^{\alpha}
\left(\log \frac{e}{1-|z|^{2}}\right)^{\beta}\right)}{\big(1-|\phi(z)|^{2}\big)^{1+\frac{1}{p}}}\right\}<+\infty;
\ee
\item[{\rm (2)}] $C_{\phi}:~\mathscr{H}_{H}^{p}(\mathbb{D})\rightarrow\mathscr{B}_{H_{\omega}}^{\alpha, \beta}$ is a compact operator
if and only if \be\label{PX2-compact} \lim_{\phi(z) \to
\mathbb{T}}\frac{|\phi'(z)|\omega\left(\left(1-|z|^{2}\right)^{\alpha}
\left(\log
\frac{e}{1-|z|^{2}}\right)^{\beta}\right)}{\big(1-|\phi(z)|^{2}\big)^{1+\frac{1}{p}}}=0.
\ee
\end{enumerate}
\end{Thm}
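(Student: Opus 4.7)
The plan is to reduce the harmonic problem to the analytic setting and then invoke the standard pointwise growth estimate for derivatives of $\mathscr{H}^{p}$-functions. Given $f\in\mathscr{H}_{H}^{p}(\mathbb{D})$ with canonical decomposition $f=h+\overline{g}$, $g(0)=0$, the hypothesis $p>1$ lets me apply the M.~Riesz theorem on conjugate functions to conclude $h,g\in\mathscr{H}^{p}(\mathbb{D})$ with $\|h\|_{p}+\|g\|_{p}\le C\|f\|_{\mathscr{H}_{H}^{p}}$. Combining this with the classical bound $|h'(w)|\le C\|h\|_{p}(1-|w|^{2})^{-1-1/p}$ yields the uniform pointwise estimate
\[
\Lambda_{f}(w)\;=\;|h'(w)|+|g'(w)|\;\le\;\frac{C\,\|f\|_{\mathscr{H}_{H}^{p}}}{(1-|w|^{2})^{1+1/p}},\qquad w\in\mathbb{D}.
\]
Setting $w=\phi(z)$ and multiplying by $|\phi'(z)|\,\omega\!\bigl((1-|z|^{2})^{\alpha}(\log\tfrac{e}{1-|z|^{2}})^{\beta}\bigr)$ shows that $\mathscr{B}_{\omega,C_{\phi}(f)}^{\alpha,\beta}(z)$ is dominated by $C\|f\|_{\mathscr{H}_{H}^{p}}$ times the expression in (\ref{PX2-bounded}); the constant term $|f(\phi(0))|$ is controlled by the Poisson-kernel estimate $|f(w)|\le C\|f\|_{p}(1-|w|^{2})^{-1/p}$. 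This proves sufficiency in part (1).

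For necessity in part (1), I would test $C_{\phi}$ against the standard family
\[
f_{w}(z)=\frac{(1-|w|^{2})^{1/p}}{(1-\overline{w}z)^{2/p}},\qquad w\in\mathbb{D},
\]
which lies in $\mathscr{H}^{p}(\mathbb{D})\subset\mathscr{H}_{H}^{p}(\mathbb{D})$ with $\sup_{w}\|f_{w}\|_{\mathscr{H}_{H}^{p}}<\infty$ and satisfies $|f_{w}'(w)|\asymp(1-|w|^{2})^{-1-1/p}$. Applying the boundedness of $C_{\phi}$ to $f_{\phi(z_{0})}$ and estimating $\mathscr{B}_{\omega,C_{\phi}(f_{\phi(z_{0})})}^{\alpha,\beta}(z_{0})$ from below gives (\ref{PX2-bounded}) on $\{z_{0}:|\phi(z_{0})|\ge 1/2\}$; continuity absorbs the complementary compact set in $\mathbb{D}$.

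For part (2), I would invoke the standard criterion that $C_{\phi}$ is compact if and only if every bounded sequence $\{f_{n}\}\subset\mathscr{H}_{H}^{p}(\mathbb{D})$ with $f_{n}\to 0$ locally uniformly satisfies $\|C_{\phi}(f_{n})\|_{\mathscr{B}_{H_{\omega}}^{\alpha,\beta}}\to 0$. The direction $(\ref{PX2-compact})\Rightarrow$ compactness is handled by splitting the supremum at $\{|\phi(z)|>1-\delta\}$, where (\ref{PX2-compact}) forces the contribution below $\varepsilon$ uniformly, and $\{|\phi(z)|\le 1-\delta\}$, where local uniform convergence of $f_{n}$ together with the derivative estimate on the relatively compact set forces vanishing. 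The converse uses the family $f_{w_{n}}$ with $w_{n}=\phi(z_{n})$ and $|w_{n}|\to 1$, which converges to $0$ locally uniformly yet produces $\|C_{\phi}(f_{w_{n}})\|_{\mathscr{B}_{H_{\omega}}^{\alpha,\beta}}$ bounded below by a positive multiple of the quantity in (\ref{PX2-compact}) evaluated at $z_{n}$.

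The main delicacy lies in exploiting the hypothesis $\lim_{t\to 0^{+}}\omega(t)/t<+\infty$. Combined with the non-increasing property of $\omega(t)/t$, it forces $\omega(t)\asymp t$ on $(0,1]$, which is what allows one to pass between the weight $\omega\!\bigl((1-|z|^{2})^{\alpha}(\log\tfrac{e}{1-|z|^{2}})^{\beta}\bigr)$ and its argument when applying the test-function estimates. The assumption $\alpha=1,\beta\le 0$ or $\alpha>1,\beta\in\mathbb{R}$ guarantees that this argument vanishes as $|z|\to 1$, so the regularity of $\omega$ becomes effective exactly where it is needed, and the whole analysis then proceeds in parallel with Theorem~D with the extra factor $\omega(\cdot)$ absorbed harmlessly.
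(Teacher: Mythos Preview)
Your overall architecture matches the paper's: reduce to analytic parts via a Riesz-type theorem, use the pointwise growth $|h'(w)|\le C\|h\|_p(1-|w|^2)^{-1-1/p}$ for sufficiency, and test against the family $f_w(z)=(1-|w|^2)^{1/p}(1-\overline{w}z)^{-2/p}$ for necessity, with the sequential criterion for compactness. That part is fine.

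The genuine gap is in the necessity direction of (1). After the test-function estimate you correctly obtain (\ref{PX2-bounded}) on $\{z_0:|\phi(z_0)|\ge 1/2\}$, but your sentence ``continuity absorbs the complementary compact set in $\mathbb{D}$'' is false: the set $\{z_0:|\phi(z_0)|<1/2\}$ need not be relatively compact in $\mathbb{D}$ (take $\phi(z)=z/2$, where it is all of $\mathbb{D}$). No continuity argument can close this. What is actually required---and this is precisely where the hypotheses $\lim_{t\to 0^+}\omega(t)/t<+\infty$ and ($\alpha=1,\beta\le 0$ or $\alpha>1$) enter---is the Schwarz--Pick inequality $|\phi'(z)|(1-|z|^2)\le 1-|\phi(z)|^2\le 1$. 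On $\{|\phi(z)|<1/2\}$ the denominator $(1-|\phi(z)|^2)^{1+1/p}$ is bounded below, while $\omega(t)\le Ct$ (from the majorant hypothesis) and Schwarz--Pick give
\[
|\phi'(z)|\,\omega\!\left((1-|z|^2)^{\alpha}\Bigl(\log\tfrac{e}{1-|z|^2}\Bigr)^{\beta}\right)\le C\,(1-|z|^2)^{\alpha-1}\Bigl(\log\tfrac{e}{1-|z|^2}\Bigr)^{\beta},
\]
which is bounded exactly under the stated conditions on $\alpha,\beta$. The paper runs this as a contradiction argument: if the supremum in (\ref{PX2-bounded}) were infinite along a sequence $\{a_n\}$, the test functions force $\phi(a_n)\to 0$, and then the display above yields a bounded quantity, a contradiction. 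Your final paragraph gestures at the right ingredients but misidentifies their role; they are not about ``where the regularity of $\omega$ becomes effective as $|z|\to 1$'' but about controlling the expression on the non-compact set where $\phi$ stays small. The same issue recurs implicitly in part (2), since you need (\ref{PX2-compact}) to imply (\ref{PX2-bounded}) before invoking the splitting argument, and that implication again rests on the Schwarz--Pick estimate above.
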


The proofs of Theorems \ref{thm-1CH}, \ref{CHZ-5}, \ref{thm-7},
\ref{thm-8}  will be presented in Section \ref{sec3}.


\section{Proofs of the main results}\label{sec3}

Let $n=1$ in  \cite[Lemma 1.1]{CPW-2012} and \cite[Theorem 1.2]{CPW-2012}, respectively. Then we obtain the following  results.

\begin{LemE}\label{L-A}
For $x\in[0,1]$, let
$$\psi(x)=\sqrt{1+2\alpha}\left(\frac{1+2\alpha}{2\alpha}\right)^{\alpha}x(1-x^{2})^{\alpha}$$
and $a_{0}(\alpha)=1/\sqrt{1+2\alpha}$, where $\alpha\in(0,+\infty)$ is a constant.
Then $\psi$ is increasing in $[0,a_{0}(\alpha)]$, decreasing in $[a_{0}(\alpha),1]$ and $\psi(a_{0}(\alpha))=1.$
\end{LemE}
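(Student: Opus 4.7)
The plan is to treat this as a routine one-variable calculus exercise: differentiate $\psi$, locate its unique interior critical point, check the sign of $\psi'$ on either side, and finally evaluate $\psi$ at the critical point.

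First I would compute
\[
\psi'(x) = \sqrt{1+2\alpha}\left(\frac{1+2\alpha}{2\alpha}\right)^{\alpha}
\bigl[(1-x^{2})^{\alpha} - 2\alpha x^{2}(1-x^{2})^{\alpha-1}\bigr]
= \sqrt{1+2\alpha}\left(\frac{1+2\alpha}{2\alpha}\right)^{\alpha}
(1-x^{2})^{\alpha-1}\bigl[1-(1+2\alpha)x^{2}\bigr]
\]
on $[0,1)$. Since the prefactor and $(1-x^{2})^{\alpha-1}$ are strictly positive on $[0,1)$, the sign of $\psi'(x)$ is governed entirely by the factor $1-(1+2\alpha)x^{2}$. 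This factor vanishes precisely at $x=1/\sqrt{1+2\alpha}=a_{0}(\alpha)$, is positive for $x\in[0,a_{0}(\alpha))$ and negative for $x\in(a_{0}(\alpha),1)$. Consequently $\psi$ is strictly increasing on $[0,a_{0}(\alpha)]$ and strictly decreasing on $[a_{0}(\alpha),1]$.

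It remains to evaluate $\psi$ at $a_{0}(\alpha)$. Substituting $x=1/\sqrt{1+2\alpha}$ gives $1-x^{2}=2\alpha/(1+2\alpha)$, hence
\[
\psi(a_{0}(\alpha))
= \sqrt{1+2\alpha}\left(\frac{1+2\alpha}{2\alpha}\right)^{\alpha}\cdot\frac{1}{\sqrt{1+2\alpha}}\cdot\left(\frac{2\alpha}{1+2\alpha}\right)^{\alpha}
= 1,
\]
which completes the argument.

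There is no genuine obstacle here; the lemma is essentially a one-line derivative computation followed by a cancellation. The only point to be careful about is the case $\alpha\in(0,1)$, where $(1-x^{2})^{\alpha-1}\to\infty$ as $x\to 1^{-}$; this does not affect the monotonicity analysis on $[0,1)$ since the factor is still positive, and the continuity of $\psi$ on $[0,1]$ (with $\psi(1)=0$) handles the endpoint. Thus the full statement follows directly from the sign analysis of $\psi'$ and the explicit evaluation at $a_{0}(\alpha)$.
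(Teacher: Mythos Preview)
Your argument is correct. The paper does not supply its own proof of this lemma; it simply quotes the statement from \cite[Lemma~1.1]{CPW-2012} (specialized to $n=1$), so there is nothing to compare against beyond noting that your direct derivative computation and sign analysis is the natural and complete justification.
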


\begin{ThmF}\label{Thm-M}
Suppose $\omega(t)=t$ and $f\in\mathscr{B}_{H_{\omega}}^{\alpha,0}$ is a holomorphic  mapping
 satisfying
$\|f\|_{\mathscr{B}_{H_{\omega},s}^{\alpha,0}}=1$ and $f'(0)=r_{0}\in(0,1]$,
where $\alpha\in(0,+\infty)$ is a constant. Then, for all $z$ with
$|z|\leq\frac{a_{0}(\alpha)+m_{\alpha}(r_{0})}{1+a_{0}(\alpha)m_{\alpha}(r_{0})}$,
we have

\be\label{eq-5} {\rm
Re}(f'(z))\geq\frac{r_{0}(m_{\alpha}(r_{0})-|z|)}{m_{\alpha}(r_{0})(1-m_{\alpha}(r_{0})|z|)^{1+2\alpha}},\ee
where $m_{\alpha}(r_{0})$ is the unique real root of the equation
$\psi(x)=r_{0}$ in the interval $[0,a_{0}(\alpha)]$ and, $\psi$ and
$a_{0}(\alpha)$ are defined as in Lemma E. Moreover, for all
$z$ with
$|z|\leq\frac{a_{0}(\alpha)-m_{\alpha}(r_{0})}{1-a_{0}(\alpha)m_{\alpha}(r_{0})}$,
we have
\be\label{eq-6}|f'(z)|\leq\frac{r_{0}(m_{\alpha}(r_{0})+|z|)}{m_{\alpha}(r_{0})(1+m_{\alpha}(r_{0})|z|)^{1+2\alpha}}.\ee
Furthermore, the estimates of {\rm (\ref{eq-5})} and {\rm
(\ref{eq-6})} are sharp.
\end{ThmF}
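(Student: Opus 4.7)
The plan is to establish sharp pointwise bounds on $F := f'$, which is holomorphic on $\mathbb{D}$, satisfies $F(0) = r_0 \in (0,1]$, and obeys $(1-|z|^2)^\alpha|F(z)| \leq 1$ on $\mathbb{D}$ by virtue of the hypothesis $\|f\|_{\mathscr{B}_{H_{\omega},s}^{\alpha,0}} = 1$ (since $\Lambda_f = |f'|$ when $f$ is holomorphic). The target inequalities \eqref{eq-5}--\eqref{eq-6} are then a Schwarz--Pick-type theorem for the weighted class $\{F \in H(\mathbb{D}) : (1-|\cdot|^2)^\alpha|F| \leq 1\}$ of analytic functions with prescribed value $r_0$ at the origin.

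I would begin by exhibiting the extremal functions. Let $m := m_\alpha(r_0) \in [0, a_0(\alpha)]$ be the unique point given by Lemma~E satisfying $\psi(m) = r_0$, and set
\[
F_0(z) := \frac{r_0(m+z)}{m(1+mz)^{1+2\alpha}}, \qquad F_1(z) := F_0(-z) = \frac{r_0(m-z)}{m(1-mz)^{1+2\alpha}}.
\]
Via the substitution $w = (z+m)/(1+mz)$ together with the M\"obius identity $1-|z|^2 = (1-m^2)(1-|w|^2)/|1-mw|^2$ and $|1+mz| = (1-m^2)/|1-mw|$, a direct calculation yields
\[
(1-|z|^2)^\alpha|F_0(z)| = \frac{r_0}{m(1-m^2)^\alpha}\,|w|(1-|w|^2)^\alpha = \psi(|w|) \leq 1
\]
by Lemma~E, with equality iff $|w| = a_0(\alpha)$. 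Thus both $F_0$ and $F_1$ satisfy the hypotheses, have $F_0(0) = F_1(0) = r_0$, and on the positive real axis they coincide with the right-hand sides of \eqref{eq-6} and \eqref{eq-5}, respectively.

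For a general admissible $F$, I would pass to a Schwarz--Pick-type comparison argument. For each $\zeta \in \mathbb{D}$ consider the auxiliary holomorphic function
\[
H_\zeta(w) := F(\varphi_\zeta(w))\,\frac{(1-|\zeta|^2)^\alpha}{(1+\bar\zeta w)^{2\alpha}}, \qquad \varphi_\zeta(w) = \frac{w+\zeta}{1+\bar\zeta w},
\]
using the principal branch of the power, which is well-defined since $1+\bar\zeta w \neq 0$ on $\overline{\mathbb{D}}$ when $|\zeta|<1$. By the M\"obius identity one checks that $(1-|w|^2)^\alpha|H_\zeta(w)| = (1-|\varphi_\zeta(w)|^2)^\alpha|F(\varphi_\zeta(w))| \leq 1$, while $H_\zeta(0) = F(\zeta)(1-|\zeta|^2)^\alpha$ and $H_\zeta(-\zeta) = r_0/(1-|\zeta|^2)^\alpha$. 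The weighted bound on $H_\zeta$ transfers the estimate from $F$ at $\zeta$ to $H_\zeta$ at $0$, and after restricting $\zeta$ to the positive real axis and tracking $H_\zeta$ along the segment from $0$ to $-\zeta$, the comparison against the extremals $F_0, F_1$ yields \eqref{eq-6} on $|z| \leq (a_0-m)/(1-a_0 m)$ and \eqref{eq-5} on $|z| \leq (a_0+m)/(1+a_0 m)$. Sharpness then follows by substituting $f'(z) = F_0(z)$ into \eqref{eq-6} and $f'(z) = F_1(z)$ into \eqref{eq-5} and specialising to the positive real axis.

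The principal obstacle is the second step: since $(1-|z|^2)^{-\alpha}$ blows up at the boundary, $F$ is not bounded on $\mathbb{D}$, so the classical Schwarz lemma cannot be invoked directly on $F$, and Schwarz--Pick applied to the restriction $F|_{\{|z|\leq a_0(\alpha)\}}$ (where $F$ is bounded by $((1+2\alpha)/(2\alpha))^\alpha$) produces only a weaker, non-sharp bound. The role of the auxiliary $H_\zeta$ is to preserve the weighted bound under M\"obius composition, but extracting the \emph{sharp} constants with the explicit radii $(a_0 \pm m)/(1 \pm a_0 m)$ requires pinpointing where the weight $\psi$ attains its maximum---exactly the content of Lemma~E, which identifies $a_0(\alpha) = 1/\sqrt{1+2\alpha}$ and determines $m = m_\alpha(r_0)$ through $\psi(m) = r_0$.
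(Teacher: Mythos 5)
The paper does not prove Theorem F at all: it is imported verbatim from Chen--Ponnusamy--Wang \cite[Theorem 1.2]{CPW-2012} (the case $n=1$), which in turn generalizes the Bonk--Minda--Yanagihara distortion theorem \cite{BMY97}. So the only fair comparison is between your attempt and that known proof, and your attempt has a genuine gap at the crucial step. What you do correctly: the reduction to $F=f'$ with $(1-|z|^{2})^{\alpha}|F(z)|\leq 1$ and $F(0)=r_{0}$ is right; the identification of the extremals $F_{0},F_{1}$, the M\"obius computation showing $(1-|z|^{2})^{\alpha}|F_{0}(z)|=\psi(|w|)\leq 1$ with supremum exactly $1$, and the resulting sharpness of (\ref{eq-5}) and (\ref{eq-6}) are all correct and essentially complete.

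The gap is the proof of the inequalities themselves. Your auxiliary function $H_{\zeta}=(F\circ\varphi_{\zeta})\cdot(\varphi_{\zeta}')^{\alpha}$ only expresses the M\"obius invariance of the class $\{(1-|z|^{2})^{\alpha}|F|\leq 1\}$: it produces another function of the same class whose value at $-\zeta$ is known and whose value at $0$ is the quantity to be estimated. That is a restatement of the original two-point problem, not a reduction of it; ``tracking $H_{\zeta}$ along the segment from $0$ to $-\zeta$'' and ``comparison against the extremals'' name no actual mechanism (subordination, a Schwarz--Pick estimate applied to a concretely constructed \emph{bounded} holomorphic function, a variational argument, etc.) by which the weighted sup bound plus the normalization $F(0)=r_{0}$ force the lower bound on ${\rm Re}\,F(\zeta)$ with the sharp constant and the sharp radius $\frac{a_{0}(\alpha)+m_{\alpha}(r_{0})}{1+a_{0}(\alpha)m_{\alpha}(r_{0})}$. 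You correctly diagnose why the naive Schwarz--Pick lemma fails (the weight blows up at the boundary, and restricting to $|z|\leq a_{0}(\alpha)$ loses sharpness), but you do not supply the replacement. In \cite{BMY97} and \cite{CPW-2012} this is done by applying the classical Schwarz--Pick lemma to a carefully built unimodular-bounded function involving $F$ and the extremal weight, and it is the technical heart of those papers; without it, (\ref{eq-5}) and (\ref{eq-6}) remain unproved. In the context of the present paper the honest route is simply to cite \cite[Theorem 1.2]{CPW-2012}, as the authors do.
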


\begin{Rem}
\label{Remark-N}
Theorem F
also holds by replacing the assumption
$\|f\|_{\mathscr{B}_{H_{\omega},s}^{\alpha,0}}=1$
by
$\|f\|_{\mathscr{B}_{H_{\omega},s}^{\alpha,0}}
\leq 1$ as in Bonk, Minda and Yanagihara \cite{BMY97}.
\end{Rem}

\subsection{The proof of Theorem \ref{thm-1CH}} Since $\mathbb{D}$ is a simply connected domain, we see that $f$ admits the canonical decomposition $f = h + \overline{g}$,
where $h$ and $g$ are analytic in $\mathbb{D}$ with $g(0)=0$.
 If $\|f\|_{\mathscr{B}_{H_{\omega},s}^{1,0}}=0$, then it is  easy to conclude (\ref{eq-3}). Without loss of generality, we may assume that $\|f\|_{\mathscr{B}_{H_{\omega},s}^{1,0}}=1$
and $\mathscr{B}_{\omega,f}^{1,0}(z_{1})\leq\mathscr{B}_{\omega,f}^{1,0}(z_{2})$.  For $z\in\mathbb{D}$, let

 $$\varphi(z)=\frac{z_{2}-z}{1-\overline{z}_{2}z},~ w=\varphi^{-1}(z_{1})~\mbox{and}~F(z)=f(\varphi(z))=\mathcal{H}(z)+\overline{\mathcal{G}(z)},$$
 where $\mathcal{H}=h\circ\varphi$ and $\mathcal{G}=g\circ\varphi.$
Then $\|F\|_{\mathscr{B}_{H_{\omega},s}^{1,0}}=\|f\|_{\mathscr{B}_{H_{\omega},s}^{1,0}}=1$ and
$$\rho(z_{1},z_{2})=\rho(\varphi^{-1}(z_{1}),\varphi^{-1}(z_{2}))=\rho(w,0)=|w|.$$
Elementary calculations lead to
\[
\varphi(0)=z_{2}~~\mbox{and}~~|\varphi'(w)|=\frac{1-|\varphi(w)|^{2}}{1-|w|^{2}}=\frac{1-|z_{1}|^{2}}{1-|w|^{2}},
\]
which imply that
\be\label{eq-0.1}\mathscr{B}_{\omega,F}^{1,0}(0)=\mathscr{B}_{\omega,f}^{1,0}(\varphi(0))=\mathscr{B}_{\omega,f}^{1,0}(z_{2})\ee
and
\be\label{eq-0.2}\mathscr{B}_{\omega,F}^{1,0}(w)=(1-|w|^{2})\Lambda_{f}(\varphi(w))|\varphi'(w)|=\mathscr{B}_{\omega,f}^{1,0}(z_{1}).\ee


\noindent {\rm $\mathbf{Case~1.}$} If $\mathscr{B}_{\omega,F}^{1,0}(0)=0$, then it is easy to conclude (\ref{eq-3}).


\noindent {\rm $\mathbf{Case~2.}$} Let $\mathscr{B}_{\omega,F}^{1,0}(0)\neq0$.

In this case, for
$z\in\mathbb{D}$, let
$F_{\theta_{0}}(z)=\mathcal{H}(z)+e^{i\theta_{0}}\mathcal{G}(z)$,
where $\theta_{0}\in[0,2\pi]$ is a real number such that
$|F_{\theta_{0}}'(0)|=\mathscr{B}_{\omega,F}^{1,0}(0).$
Then $\|F_{\theta_{0}}\|_{\mathscr{B}_{H_{\omega},s}^{1,0}}\leq 1$.
Set
$F_{\theta_{0}}'(0)=\beta e^{i\theta}$, where
$\beta=\mathscr{B}_{\omega,F}^{1,0}(0)$. An application of Theorem
F and Remark \ref{Remark-N} to $e^{-i\theta}F_{\theta_{0}}(z)$ gives that, for
$|z|\leq\frac{a_{0}(1)+m_{1}(\beta)}{1+a_{0}(1)m_{1}(\beta)}$,

\be\label{eq-9}\Lambda_{F}(z)\geq{\rm
Re}(e^{-i\theta}F_{\theta_{0}}'(z))\geq\frac{\beta(m_{1}(\beta)-|z|)}{m_{1}(\beta)(1-m_{1}(\beta)|z|)^{3}},\ee
where
\be\label{eq-0.5}\frac{3\sqrt{3}}{2}m_{1}(\beta)(1-m_{1}^{2}(\beta))=\beta. \ee


\noindent {\rm $\mathbf{Subcase~2.1}.$} Suppose that $|w|\leq m_{1}(\beta)$.


Since $$|w|\leq m_{1}(\beta)\leq\frac{a_{0}(1)+m_{1}(\beta)}{1+a_{0}(1)m_{1}(\beta)}$$
and $$1-|w|^{2}\geq1-m_{1}(\beta)|w|\geq\left(1-m_{1}(\beta)|w|\right)^{3},$$
by (\ref{eq-9}),
we see that

\beq\label{eq-0.4} \beta-\mathscr{B}_{\omega,F}^{1,0}(w)&\leq&
\beta-\frac{(1-|w|^{2})\beta(m_{1}(\beta)-|w|)}{m_{1}(\beta)(1-m_{1}(\beta)|w|)^{3}}\\
\nonumber
&=&\frac{\beta}{m_{1}(\beta)}\left(m_{1}(\beta)-\frac{(1-|w|^{2})(m_{1}(\beta)-|w|)}{(1-m_{1}(\beta)|w|)^{3}}\right)\\
\nonumber
&\leq&\frac{\beta}{m_{1}(\beta)}\left(m_{1}(\beta)-(m_{1}(\beta)-|w|)\right)\\
\nonumber &=&\frac{\beta}{m_{1}(\beta)}|w|. \eeq



 It follows from (\ref{eq-0.1}), (\ref{eq-0.2}),   (\ref{eq-0.5}) and (\ref{eq-0.4}) that

\begin{eqnarray*}
\mathscr{B}_{\omega,f}^{1,0}(z_{2})-\mathscr{B}_{\omega,f}^{1,0}(z_{1})&=&
\mathscr{B}_{\omega,F}^{1,0}(0)-\mathscr{B}_{\omega,F}^{1,0}(w)=\beta-(1-|w|^{2})\Lambda_{F}(w)\\ 
&\leq&\frac{\beta}{m_{1}(\beta)}|w|=\frac{3\sqrt{3}}{2}|w|(1-m_{1}^{2}(\beta))\\ 
&\leq&\frac{3\sqrt{3}}{2}|w|.
\end{eqnarray*}

\noindent {\rm $\mathbf{Subcase~2.2}.$} Suppose that $|w|\geq m_{1}(\beta)$.

By (\ref{eq-0.1}), (\ref{eq-0.2}) and (\ref{eq-0.5}), we have

\beqq
\mathscr{B}_{\omega,f}^{1,0}(z_{2})-\mathscr{B}_{\omega,f}^{1,0}(z_{1})&=&\mathscr{B}_{\omega,F}^{1,0}(0)-\mathscr{B}_{\omega,F}^{1,0}(w)=\beta-(1-|w|^{2})\Lambda_{F}(w)\\
&\leq&\beta=\frac{3\sqrt{3}}{2}m_{1}(\beta)(1-m_{1}^{2}(\beta))\\
&\leq&\frac{3\sqrt{3}}{2}(1-m_{1}^{2}(\beta))|w|\\
&\leq&\frac{3\sqrt{3}}{2}|w|.
\eeqq

Combining Subcases 2.1 and 2,2  gives the desired result.

Now we prove the sharpness part. For any $\varepsilon\in(0,
3\sqrt{3}/2]$, let

 \be\label{eq-0.9}
 m^*=\min\left\{a_{0}(1),\sqrt{2\sqrt{3}\varepsilon}/3\right\}\ee
and
\be\label{eq-0.91}\beta=\frac{3\sqrt{3}}{2}m^*\left(1-(m^*)^{2}\right).\ee
Then $m_1(\beta)=m^*$ and
it follows from (\ref{eq-0.9}) that

 \be\label{eq-0.901}\frac{3\sqrt{3}}{2}\left(1-(m_{1}(\beta))^{2}\right)\geq\frac{3\sqrt{3}}{2}-\varepsilon.\ee
 For $z\in\mathbb{D}$, let $$f_{\beta}(z)=\int_{0}^{z}\frac{\beta(m_{1}(\beta)-\xi)}{m_{1}(\beta)(1-m_{1}(\beta)\xi)^{3}}d\xi,$$
which implies that $f_{\beta}'(0)=\beta$. Next we prove
$\|f_{\beta}\|_{\mathscr{B}_{H_{\omega},s}^{1,0}}=1$. For $z\in\mathbb{D}$, let
$$\eta(z)=-\frac{3\sqrt{3}}{4}z^{2}.$$ By Lemma E, we obtain

\be\label{eq-0.92}\|\eta\|_{\mathscr{B}_{H_{\omega},s}^{1,0}}=\sup_{z\in\mathbb{D}}\mathscr{B}_{\omega,\eta}^{1,0}(z)
=\sup_{z\in\mathbb{D}}\left\{\frac{3\sqrt{3}}{2}|z|(1-|z|^{2})\right\}=1.\ee
For $z\in\mathbb{D}$, let
$$\phi_{a}(z)=\frac{m_{1}(\beta)-z}{1-m_{1}(\beta)z},$$
where $a=m_{1}(\beta).$ Then we see that

\beqq
\mathscr{B}_{\omega,f_{\beta}}^{1,0}(z)&=&
\frac{\frac{3\sqrt{3}}{2}(1-|z|^{2})|m_{1}(\beta)-z|\left(1-(m_{1}(\beta))^{2}\right)}{|1-m_{1}(\beta)z|^{3}}\\
&=&(1-|z|^{2})|(\eta(\phi_{a}(z)))'|\\
&=&\mathscr{B}_{\omega,\eta}^{1,0}(\phi_{a}(z)), \eeqq which,
together with  (\ref{eq-0.92}), yields that
$\|f_{\beta}\|_{\mathscr{B}_{H_{\omega},s}^{1,0}}=1$. Let
$z_{1}=m_{1}(\beta)$ and $z_{2}=0$. Then, by (\ref{eq-0.91}), we
have

\beqq
\left|\mathscr{B}_{\omega,f_{\beta}}^{1,0}(z_{2})-\mathscr{B}_{\omega,f_{\beta}}^{1,0}(z_{1})\right|
&=&\beta=\frac{3\sqrt{3}}{2}m_{1}(\beta)\left(1-(m_{1}(\beta))^{2}\right)\\
&=&\frac{3\sqrt{3}}{2}\left(1-(m_{1}(\beta))^{2}\right)
\rho(z_2,z_1), \eeqq which, together with (\ref{eq-0.901}), implies
that
$$\left|\mathscr{B}_{\omega,f_{\beta}}^{1,0}(z_{2})-\mathscr{B}_{\omega,f_{\beta}}^{1,0}(z_{1})\right|\geq
\left(\frac{3\sqrt{3}}{2}-\varepsilon\right)
\rho(z_2,z_1).$$
Therefore, the above inequality shows that the constant $3\sqrt{3}/2$ is sharp. The proof of this theorem is finished.
\qed

We obtain the following lemma
as in \cite[Lemma 1]{C03}
and
\cite[Lemma 2.14]{H-MJOM}.

\begin{Lem}\label{L-O}
$(\ref{eq-bounded-below})$ holds on $\mathscr{B}_{H_{\omega}}^{1,0}$
if and only if there exists a constant $\delta \in (0,1]$ such that
\[
\|C_{\phi}(f)\|_{\mathscr{B}_{H_{\omega},s}^{1,0}}\geq \delta\|f\|_{\mathscr{B}_{H_{\omega},s}^{1,0}},
\quad
\forall f\in \mathscr{B}_{H_{\omega}}^{1,0}.
\]
\end{Lem}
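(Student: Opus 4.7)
The plan is to exploit the fact that the full norm decomposes as
\[
\|f\|_{\mathscr{B}_{H_{\omega}}^{1,0}}=|f(0)|+\|f\|_{\mathscr{B}_{H_{\omega},s}^{1,0}},\qquad \|C_{\phi}(f)\|_{\mathscr{B}_{H_{\omega}}^{1,0}}=|f(\phi(0))|+\|C_{\phi}(f)\|_{\mathscr{B}_{H_{\omega},s}^{1,0}},
\]
so that the norm and the semi-norm differ only by a point evaluation, at $0$ and at $\phi(0)$ respectively. The bridge between these two evaluations is the hyperbolic Lipschitz estimate
\[
|f(0)-f(\phi(0))|\leq \sigma(0,\phi(0))\,\|f\|_{\mathscr{B}_{H_{\omega},s}^{1,0}},
\]
obtained by integrating $\Lambda_{f}$ along the hyperbolic geodesic from $0$ to $\phi(0)$; this is the quantitative form of the equivalence recorded at (\ref{eq-1}). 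Crucially $\sigma(0,\phi(0))<+\infty$ since $\phi(0)\in\mathbb{D}$.

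For the necessity direction, I would apply (\ref{eq-bounded-below}) to the translate $\tilde f:=f-f(\phi(0))$. Translation by a constant leaves $\Lambda_{\tilde f}=\Lambda_{f}$ unchanged, so $\|\tilde f\|_{\mathscr{B}_{H_{\omega},s}^{1,0}}=\|f\|_{\mathscr{B}_{H_{\omega},s}^{1,0}}$, while $(C_{\phi}\tilde f)(0)=0$ forces $\|C_{\phi}(\tilde f)\|_{\mathscr{B}_{H_{\omega}}^{1,0}}=\|C_{\phi}(f)\|_{\mathscr{B}_{H_{\omega},s}^{1,0}}$. Since trivially $\|\tilde f\|_{\mathscr{B}_{H_{\omega}}^{1,0}}\geq\|f\|_{\mathscr{B}_{H_{\omega},s}^{1,0}}$, the hypothesis applied to $\tilde f$ immediately yields the semi-norm estimate with $\delta=C$.

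For the sufficiency direction, the Lipschitz estimate gives $|f(0)|\leq|f(\phi(0))|+\sigma(0,\phi(0))\|f\|_{\mathscr{B}_{H_{\omega},s}^{1,0}}$, whence
\[
\|f\|_{\mathscr{B}_{H_{\omega}}^{1,0}}\leq |f(\phi(0))|+\bigl(1+\sigma(0,\phi(0))\bigr)\|f\|_{\mathscr{B}_{H_{\omega},s}^{1,0}}.
\]
Combined with $\|C_{\phi}(f)\|_{\mathscr{B}_{H_{\omega}}^{1,0}}\geq |f(\phi(0))|+\delta\|f\|_{\mathscr{B}_{H_{\omega},s}^{1,0}}$ coming from the semi-norm hypothesis, the choice $C:=\min\{1,\,\delta/(1+\sigma(0,\phi(0)))\}>0$ delivers (\ref{eq-bounded-below}). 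Neither implication presents a serious obstacle; the only point that needs attention is that the hyperbolic Lipschitz estimate must hold with the semi-norm as its constant for complex-valued harmonic (and not merely analytic) Bloch mappings, which is precisely the content of (\ref{eq-1}), so no additional work is required.
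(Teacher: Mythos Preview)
Your proof is correct and follows essentially the same route as the paper: for sufficiency both you and the authors combine the hyperbolic Lipschitz estimate $|f(0)-f(\phi(0))|\leq \sigma(0,\phi(0))\|f\|_{\mathscr{B}_{H_{\omega},s}^{1,0}}$ with the semi-norm hypothesis to control the full norm, and for necessity your translation trick $\tilde f=f-f(\phi(0))$ is precisely the argument from \cite{C03} to which the paper defers. The only cosmetic point is that in the necessity direction one should set $\delta=\min\{C,1\}$ to guarantee $\delta\in(0,1]$, which is in any case automatic since Schwarz--Pick forces $\|C_{\phi}(f)\|_{\mathscr{B}_{H_{\omega},s}^{1,0}}\leq\|f\|_{\mathscr{B}_{H_{\omega},s}^{1,0}}$.
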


\begin{proof}
Assume that there exists a $\delta \in (0,1]$ such that
$\| f\circ \phi\|_{\mathscr{B}_{H_{\omega},s}^{1,0}}\geq \delta \| f\|_{\mathscr{B}_{H_{\omega},s}^{1,0}}$
for all $f \in {\mathscr{B}_{H_{\omega}}^{1,0}}$.
Let $f\in {\mathscr{B}_{H_{\omega}}^{1,0}}$.
Then,
we have
\begin{align*}
|f(0)|&\leq |f(\phi(0))|+|f(0)-f(\phi(0))|
\\
&\leq
|f(\phi(0))|+\| f\|_{\mathscr{B}_{H_{\omega},s}^{1,0}}
\sigma(0,\phi(0))
\\
&=
|f(\phi(0))|+\frac{\| f\|_{\mathscr{B}_{H_{\omega},s}^{1,0}}}{2}
\log\frac{1+| \phi(0)|}{1-| \phi(0)|}.
\end{align*}
Therefore, we have
\begin{align*}
\| f\|_{\mathscr{B}_{H_{\omega}}^{1,0}}
&\leq
|f(\phi(0))|+\frac{\| f\circ \phi\|_{\mathscr{B}_{H_{\omega},s}^{1,0}}}{\delta}
\left\{ 1+\frac{1}{2}\log\frac{1+| \phi(0)|}{1-| \phi(0)|}\right\}
\\
&\leq
\frac{\| f\circ \phi\|_{\mathscr{B}_{H_{\omega}}^{1,0}}}{\delta}
\left\{ 1+\frac{1}{2}\log\frac{1+| \phi(0)|}{1-| \phi(0)|}\right\}.
\end{align*}
This implies that
$(\ref{eq-bounded-below})$ holds on $\mathscr{B}_{H_{\omega}}^{1,0}$.

The converse can be proved by using arguments similar to those in the proof of
\cite[Lemma 1]{C03}.
This completes the proof.
\end{proof}

\subsection{The proof of Theorem \ref{CHZ-5}}
Without loss of generality, we may assume that $\|f\|_{\mathscr{B}_{H_{\omega},s}^{1,0}}=1$,
where $\omega(t)=t$.
Let $a\in (0,1)$ be arbitrarily fixed.
There is a point $w\in\mathbb{D}$ such that
\be\label{hk-1}\mathscr{B}_{\omega,f}^{1,0}(w)>1-\delta,\ee
where $\delta=a(1-3\sqrt{3}r/2)$ and $r<2\sqrt{3}/9$. From the
assumption, we see that there is a point $z_{w}$ such that

$$\rho(\phi(z_{w}),w)<r<\frac{2\sqrt{3}}{9},~\mbox{and}~\frac{1-|z_{w}|^{2}}{1-|\phi(z_{w})|^{2}}|\phi'(z_{w})|>\epsilon,$$
which, together with (\ref{eq-3}) and  (\ref{hk-1}),  imply that

\beq\label{eq-0.96}
\mathscr{B}_{\omega,f}^{1,0}(\phi(z_{w}))&\geq&\mathscr{B}_{\omega,f}^{1,0}(w)-\frac{3\sqrt{3}}{2}\rho(\phi(z_{w}),w)\\ \nonumber
&\geq&1-\delta-\frac{3\sqrt{3}}{2}r\\ \nonumber
&=&(1-a)\left(1-\frac{3\sqrt{3}}{2}r\right)\\ \nonumber
&>&0.
\eeq
From (\ref{eq-0.96}), we conclude that

$$\|C_{\phi}(f)\|_{\mathscr{B}_{H_{\omega},s}^{1,0}}
\geq\mathscr{B}_{\omega,f}^{1,0}(\phi(z_{w}))\frac{1-|z_{w}|^{2}}{1-|\phi(z_{w})|^{2}}|\phi'(z_{w})|\geq(1-a)\left(1-\frac{3\sqrt{3}}{2}r\right)\epsilon.$$
Since $a\in(0,1)$ is arbitrary.
we have
\[
\|C_{\phi}(f)\|_{\mathscr{B}_{H_{\omega},s}^{1,0}}
\geq
\left(1-\frac{3\sqrt{3}}{2}r\right)\epsilon,
\]
which, together with Lemma \ref{L-O}, gives $(\ref{eq-bounded-below})$.
The proof of this theorem is complete.
\qed


The following result is well-known.

\begin{LemG}{\rm (cf. \cite[Lemma 5]{CPR})}\label{Lemx}
Suppose that $a,~b\in[0,+\infty)$ and $\tau\in(0,+\infty)$. Then
$$(a+b)^{\tau}\leq2^{\max\{\tau-1,0\}}(a^{\tau}+b^{\tau}).$$
\end{LemG}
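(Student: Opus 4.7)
The inequality is a classical power-mean/convexity estimate, and the natural plan is to split into the two cases $\tau \ge 1$ and $0 < \tau < 1$ corresponding to the two branches of $\max\{\tau-1,0\}$. In both cases one may assume $a+b>0$, since the inequality is trivial if $a=b=0$.

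For $\tau \ge 1$ the function $t \mapsto t^{\tau}$ is convex on $[0,\infty)$, and applying convexity to the midpoint $(a+b)/2$ gives
\[
\left(\frac{a+b}{2}\right)^{\tau} \le \frac{a^{\tau}+b^{\tau}}{2}.
\]
Multiplying both sides by $2^{\tau}$ immediately yields $(a+b)^{\tau} \le 2^{\tau-1}(a^{\tau}+b^{\tau})$, which is exactly the claim since $\max\{\tau-1,0\}=\tau-1$ in this range.

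For $0<\tau<1$ we have $\max\{\tau-1,0\}=0$, so the target is the subadditivity $(a+b)^{\tau}\le a^{\tau}+b^{\tau}$. I would normalize by writing $s=a/(a+b)$ and $1-s=b/(a+b)$, both of which lie in $[0,1]$. Since $0<\tau<1$, the elementary estimate $t\le t^{\tau}$ for $t\in[0,1]$ gives $s^{\tau}+(1-s)^{\tau}\ge s+(1-s)=1$. Multiplying by $(a+b)^{\tau}$ yields the desired $a^{\tau}+b^{\tau}\ge (a+b)^{\tau}$.

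There is no real obstacle here; the only minor point to handle cleanly is the degenerate case $a+b=0$, which is trivial, and the bookkeeping of the two regimes of $\tau$ so that the exponent $\max\{\tau-1,0\}$ is recovered correctly in each. A short remark that both sub-arguments are sharp (equality when $a=b$ for $\tau\ge 1$, and when one of $a,b$ vanishes for $0<\tau<1$) could be added for completeness, but is not needed for the statement.
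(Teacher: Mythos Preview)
Your proof is correct and entirely standard. Note that the paper does not actually prove Lemma~G: it is stated as a known elementary inequality with a reference to \cite[Lemma~5]{CPR}, so there is no ``paper's own proof'' to compare against. Your convexity argument for $\tau\ge 1$ and subadditivity argument for $0<\tau<1$ are exactly the expected justifications.
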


Consider a weight $\varphi:~[0,1)\rightarrow(0,+\infty)$, that is,
$\varphi$ is a non-decreasing, continuous, unbounded function.
Furthermore, a weight function $\varphi$ is called doubling if there
is a constant $C>1$ such that

$$\varphi(1-s/2)<C\varphi(1-s)$$
for $s\in(0,1]$ (see \cite{AD}).

\begin{Lem}\label{O-lemm-1}
Let  $\alpha\in(0,+\infty)$, $\beta\leq\alpha$ and $\omega$ be a
majorant. Then $\varphi(t)=1/\omega(\chi(t))$ is a doubling
function, where
$$\chi(t)=\left(1-t^{2}\right)^{\alpha}
\left(\log \frac{e}{1-t^{2}}\right)^{\beta}$$ for $t\in[0,1)$.
\end{Lem}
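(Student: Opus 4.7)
The plan is to verify in turn the three qualitative requirements for $\varphi$ to be a weight (continuity, monotonicity, unboundedness) and then the doubling inequality, the first three being routine and the last being the only step with any substance.

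For $t\in[0,1)$ one has $1-t^{2}\in(0,1]$ and $\log(e/(1-t^{2}))\geq 1$, so $\chi(t)>0$ and $\chi$ is continuous on $[0,1)$; since $\omega$ is continuous and vanishes only at $0$, the function $\varphi=1/(\omega\circ\chi)$ is continuous. Monotonicity of $\chi$ is seen most cleanly via the substitution $u=1-t^{2}\in(0,1]$, which is decreasing in $t$; writing $\chi=u^{\alpha}(1-\log u)^{\beta}$ and differentiating gives
\[
\frac{d\chi}{du}=u^{\alpha-1}(1-\log u)^{\beta-1}\bigl[\alpha(1-\log u)-\beta\bigr],
\]
and the bracketed factor is at least $\alpha-\beta\geq 0$ by the hypothesis $\beta\leq\alpha$ combined with $\log u\leq 0$. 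Hence $\chi$ is non-decreasing in $u$, non-increasing in $t$, so $\varphi$ is non-decreasing. Finally, since the polynomial factor $(1-t^{2})^{\alpha}$ dominates the logarithmic factor, $\chi(t)\to 0^{+}$ as $t\to 1^{-}$, whence $\varphi(t)\to+\infty$.

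For the doubling inequality, the key reduction is the majorant property that $\omega(t)/t$ is non-increasing, which rearranges for $0<a\leq b$ to $\omega(b)/\omega(a)\leq b/a$. Applied with $a=\chi(1-s/2)$ and $b=\chi(1-s)$ (the inequality $a\leq b$ being the monotonicity of $\chi$ just proved), this reduces the doubling inequality to producing a uniform upper bound on $\chi(1-s)/\chi(1-s/2)$ for $s\in(0,1]$. Using $1-(1-s)^{2}=s(2-s)$ and $1-(1-s/2)^{2}=s(4-s)/4$, one obtains
\[
\frac{\chi(1-s)}{\chi(1-s/2)}=\left(\frac{4(2-s)}{4-s}\right)^{\alpha}\left(\frac{\log\frac{e}{s(2-s)}}{\log\frac{4e}{s(4-s)}}\right)^{\beta}.
\]
The first factor is continuous on $[0,1]$ with values in the compact interval $[(4/3)^{\alpha},2^{\alpha}]$. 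For the second factor, both logarithms are comparable to $\log(1/s)$ as $s\to 0^{+}$ (so their ratio tends to $1$), while on any closed subinterval of $(0,1]$ both are continuous and strictly positive; the ratio is therefore pinched between two positive constants, and raising it to the power $\beta$ (of either sign) keeps it bounded.

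The main technical point is confirming that the log-ratio stays bounded away from $0$ and $\infty$ uniformly near $s=0$, which follows from the expansions $\log(e/(s(2-s)))=\log(1/s)+\log(e/(2-s))$ and $\log(4e/(s(4-s)))=\log(1/s)+\log(4e/(4-s))$, since the remaining pieces stay bounded on $(0,1]$. Once the monotonicity of $\chi$ is established, the rest of the argument is an elementary log-comparison.
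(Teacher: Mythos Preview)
Your proof is correct and follows essentially the same route as the paper: both reduce the doubling inequality, via the majorant property $\omega(b)/\omega(a)\le b/a$ for $a\le b$, to a uniform bound on $\chi(1-s)/\chi(1-s/2)$, and then obtain that bound by continuity on $(0,1]$ together with the limit $1$ for the log-ratio at $s=0$. Your version is slightly more complete, since you explicitly verify that $\chi$ is non-increasing (hence $\varphi$ is a weight), a fact the paper invokes without justification.
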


\begin{proof} Since $\omega(x)/x$ is non-increasing for $x>0$ and $\chi(t)$
is decreasing for $t>0$, we see that, for $s\in(0,1]$,

\beq\label{eq-ch-j1}
\frac{\varphi\left(1-\frac{s}{2}\right)}{\varphi(1-s)}&=&\frac{\omega(\chi(1-s))}{\omega\left(\chi\left(1-\frac{s}{2}\right)\right)}=
\frac{\frac{\omega(\chi(1-s))}{\chi(1-s)}}{\frac{\omega\left(\chi\left(1-\frac{s}{2}\right)\right)}{\chi\left(1-\frac{s}{2}\right)}}\cdot\frac{\chi(1-s)}{\chi\left(1-\frac{s}{2}\right)}\\
\nonumber &\leq&\frac{\chi(1-s)}{\chi\left(1-\frac{s}{2}\right)}.
\eeq Elementary computations give
$$\lim_{s\rightarrow0}\frac{\chi(1-s)}{\chi\left(1-\frac{s}{2}\right)}=2^{\alpha}$$
and
$$\lim_{s\rightarrow1^{-}}\frac{\chi(1-s)}{\chi\left(1-\frac{s}{2}\right)}=\left(\frac{4}{3}\right)^{\alpha}\frac{1}{\left(1+\log\frac{4}{3}\right)^{\beta}},$$
which, together with (\ref{eq-ch-j1}) and the continuity of $\chi(1-s)/\chi\left(1-\frac{s}{2}\right)$, implies that there is a
constant $C>1$ such that

$$\varphi\left(1-\frac{s}{2}\right)< C\varphi(1-s). $$
Hence $\varphi$ is a doubling function.
\end{proof}

Denote by $L^{p}(\mathbb{T})$ {\color{red}$(p\in(0,+\infty))$} the set of all measurable functions  $F$
of $\mathbb{T}$ into $\mathbb{C}$ with
$$\|F\|_{L^{p}}=\left(\frac{1}{2\pi}\int_{0}^{2\pi}|F(e^{i\theta})|^{p}d\theta\right)^{\frac{1}{p}}<+\infty.$$



Given $f\in \mathscr{A}$, the Littlewood-Paley
$\mathscr{G}$-function is defined as follows

$$\mathscr{G}(f)(\zeta)=\left(\int_{0}^{1}|f'(r\zeta)|^{2}(1-r)dr\right)^{\frac{1}{2}},~\zeta\in\partial\mathbb{D}.$$

It is well known that
\be\label{1.14ch}f\in \mathscr{H}^{p}(\mathbb{D})~~\mbox{if and only
if}~~ \mathscr{G}(f)\in L^{p}(\mathbb{T})\ee for
$p\in(0,+\infty)$ (see \cite{AB,KL,Kw,P-2013}). Moreover,
(\ref{1.14ch}) also can be rewritten in the following form.
 There is a positive constant $C$, depending only on $p$, such that
\be \label{1.19-ch}
\frac{1}{C}\|f\|_{p}^{p}\leq|f(0)|^{p}+\frac{1}{2\pi}\int_{0}^{2\pi}(\mathscr{G}(f)(e^{i\theta}))^{p}d\theta\leq\, C\|f\|_{p}^p
\ee for $p\in(0,+\infty)$ (see \cite{AB,Stei}).



\subsection{The proof of Theorem \ref{thm-7}}
We first prove $(2)\Rightarrow(1)$.
Without loss of generality, let
$f\in
\mathscr{B}_{H_{\omega}}^{\alpha,\beta}$ with
$\|f\|_{\mathscr{B}_{H_{\omega}}^{\alpha,\beta}}\neq0$.
Since $\mathbb{D}$ is a simply
connected domain, we see that $f$ admits the canonical decomposition $f =
h_{1}+\overline{h_{2}}$, where $h_{1}$ and $h_{2}$ are analytic in
$\mathbb{D}$ with $h_{2}(0)=0$. Then
$h_{1},~h_{2}\in\mathscr{B}_{H_{\omega}}^{\alpha,\beta}.$ Let
$$\chi(t)=\left(1-t^{2}\right)^{\alpha}
\left(\log \frac{e}{1-t^{2}}\right)^{\beta}$$ for $t\in[0,1)$.
 For $j=1,2$, elementary calculations lead to

$$|C_{\phi}(h_{j})'(z)|=|h_{j}'(\phi(z))||\phi'(z)|\leq\|f\|_{\mathscr{B}_{H_{\omega}}^{\alpha,\beta}}\frac{|\phi'(z)|}
{\omega\left(\chi(|\phi(z)|)\right)},$$ which
implies that

\beq\label{eq-pp-1}
&&\frac{1}{2\pi}\int_{0}^{2\pi}\left(\mathscr{G}(C_{\phi}(h_{j}))(e^{i\theta})\right)^{p}d\theta\\ \nonumber
&=&\int_{0}^{2\pi}
\left(\int_{0}^{1}|h_{j}'(\phi(re^{i\theta}))|^{2}|\phi'(re^{i\theta})|^{2}(1-r)dr\right)^{\frac{p}{2}}\frac{d\theta}{2\pi}\\ \nonumber
&\leq&\|f\|_{\mathscr{B}_{H_{\omega}}^{\alpha,\beta}}^{p}\int_{0}^{2\pi}\left(\int_{0}^{1}\frac{|\phi'(re^{i\theta})|^{2}(1-r)dr}
{\omega^{2}\left(\chi(|\phi(re^{i\theta})|)\right)}\right)^{\frac{p}{2}}\frac{d\theta}{2\pi}\\ \nonumber
 &<&+\infty, \eeq where $\theta\in[0,2\pi]$ and
$\zeta=e^{i\theta}.$ It follows from (\ref{1.14ch}) and
(\ref{eq-pp-1}) that
 $C_{\phi}(h_{j})\in
\mathscr{H}^{p}(\mathbb{D})$, where $j=1,2$.

Next, we will show that $C_{\phi}$ is  a bounded operator. Since $\|f\|_{\mathscr{B}_{H_{\omega}}^{\alpha,\beta}}\neq0$,
we see that at least one of the two  functions $h_{1}$ and $h_{2}$ is not constant. Without loss of generality, we can assume
$h_{1}$ and $h_{2}$ are not constant functions.

Since
\begin{eqnarray*}
|C_{\phi}(h_{j})(0)| &\leq& |h_j(0)|+|h_j(\phi(0))-h_j(0)|
\\
& \leq&
 |h_j(0)|+\| h_j\|_{\mathscr{B}_{H_{\omega}}^{\alpha,\beta}}\frac{|\phi(0)|}{\omega\left(\chi(|\phi(0)|)\right)},
\end{eqnarray*}
by using (\ref{1.19-ch}), we see that there is a positive constant $C$, depending only on $p$ and $|\phi(0)|$, such that

\beqq
\|C_{\phi}(h_{j})\|_{p}^{p}&\leq&\, C\left(|C_{\phi}(h_{j})(0)|^{p}+\frac{1}{2\pi}\int_{0}^{2\pi}(\mathscr{G}(C_{\phi}(h_{j}))(e^{i\theta}))^{p}d\theta\right)\\
&\leq&C\|h_{j}\|_{\mathscr{B}_{H_{\omega}}^{\alpha,\beta}}^{p}\left(1+
\int_{0}^{2\pi}\left(\int_{0}^{1}\frac{|\phi'(re^{i\theta})|^{2}(1-r)dr}
{\omega^{2}\left(\chi(|\phi(re^{i\theta})|)\right)}\right)^{\frac{p}{2}}\frac{d\theta}{2\pi}\right),
\eeqq
which, together with Lemma G, gives that
\beqq\|C_{\phi}(f)\|_{p}^{p}&\leq&2^{\max\{p-1,0\}}\left(\|C_{\phi}(h_{1})\|_{p}^{p}+\|C_{\phi}(h_{2})\|_{p}^{p}\right)\\
&\leq&2^{\max\{p-1,0\}}CC^{\ast}\big(\|h_{1}\|_{\mathscr{B}_{H_{\omega}}^{\alpha,\beta}}^{p}+\|h_{2}\|_{\mathscr{B}_{H_{\omega}}^{\alpha,\beta}}^{p}\big)\\
&\leq&2^{1+\max\{p-1,0\}}\|f\|^{p}_{\mathscr{B}_{H_{\omega}}^{\alpha,\beta}}CC^{\ast},
\eeqq
where $$C^{\ast}=1+\int_{0}^{2\pi}\left(\int_{0}^{1}\frac{|\phi'(re^{i\theta})|^{2}(1-r)dr}
{\omega^{2}\left(\chi(|\phi(re^{i\theta})|)\right)}\right)^{\frac{p}{2}}\frac{d\theta}{2\pi}<+\infty.$$
Consequently, $C_{\phi}:~\mathscr{B}_{H_{\omega}}^{\alpha,\beta}\rightarrow\mathscr{H}_{H}^{p}(\mathbb{D})$ is a bounded operator.

Next, we prove $(1)\Rightarrow(2)$. By Lemma \ref{O-lemm-1} and \cite[Lemma 1]{AD}, we see that there are two
functions $f_{1},~f_{2}\in\mathscr{B}_{H_{\omega}}^{\alpha,\beta}\cap\mathscr{A}$ such that, for $z\in\mathbb{D}$,

\[
|f_{1}'(z)|+|f_{2}'(z)|\geq\frac{1}{\omega\left(\chi(|z|)\right)},
\]
which, together with Lemma G, implies that

\beqq
+\infty&>&\frac{1}{2\pi}\int_{0}^{2\pi}\left(\mathscr{G}(C_{\phi}(f_{1}))(e^{i\theta})\right)^{p}d\theta
+\frac{1}{2\pi}\int_{0}^{2\pi}\left(\mathscr{G}(C_{\phi}(f_{2}))(e^{i\theta})\right)^{p}d\theta\\
&\geq&\frac{C_{p}}{2\pi}\int_{0}^{2\pi}
\bigg(\int_{0}^{1}|\phi'(re^{i\theta})|^{2}\Big(|f_{1}'(\phi(re^{i\theta}))|\\
&&+|f_{2}'(\phi(re^{i\theta}))|\Big)^{2}(1-r)dr\bigg)^{\frac{p}{2}}d\theta\\
&\geq&\frac{C_{p}}{2\pi}\int_{0}^{2\pi}
\left(\int_{0}^{1}\frac{|\phi'(re^{i\theta})|^{2}(1-r)}{\omega^{2}\left(\chi(|\phi(re^{i\theta})|)\right)}dr\right)^{\frac{p}{2}}d\theta,
\eeqq where $C_{p}=2^{-p/2-\max\{p/2-1,0\}}.$

Now we prove $(1)\Leftrightarrow(3)$. We only need to prove
$(1)\Rightarrow(3)$ because $(3)\Rightarrow(1)$ is obvious. Let
$C_{\phi}:\,\mathscr{B}_{H_{\omega}}^{\alpha,\beta}\rightarrow
\mathscr{H}_{H}^{p}(\mathbb{D})$ be a bounded operator. Then $f\circ
\phi\in \mathscr{H}_{H}^{p}(\mathbb{D})$ for all $f\in
\mathscr{B}_{H_{\omega}}^{\alpha,\beta}$, and
$|\phi(\zeta)|:=\left|\lim_{r\rightarrow1^{-}}\phi(r\zeta)\right|\leq1$
exists  for almost every $\zeta\in\mathbb{T}$. Suppose that $\{
f_{n}\}=\{ h_{n}+\overline{g_{n}}\}$ is a sequence in
$\mathscr{B}_{H_{\omega}}^{\alpha,\beta}$ such that
$\|f_{n}\|_{\mathscr{B}_{H_{\omega}}^{\alpha,\beta}}\leq1,$ where
$h_{n}$ and $g_{n}$ are analytic in $\mathbb{D}$ with $g_{n}(0)=0$.
We are going to prove that $\{ C_{\phi}(f_{n})\}$ has a convergent
subsequence in $\mathscr{H}_{H}^{p}(\mathbb{D})$. Elementary
calculations give that, for $z\in\mathbb{D}$,

\beqq |f_{n}(z)|\leq |f_{n}(0)|+\|f_{n}\|_{\mathscr{B}_{H_{\omega}}^{\alpha,\beta}}\int_{0}^{1}\frac{|z|}{\omega\left(\chi(|z|t)\right)}dt
\leq1+\int_{0}^{1}\frac{|z|}{\omega\left(\chi(|z|t)\right)}dt, \eeqq
which implies that $\{ f_{n}\}$ forms a normal family, so that there exists a subsequence of $\{ f_{n}\}$ that converges uniformly on compact subsets
of $\mathbb{D}$ to a complex-valued harmonic function $f=h+\overline{g}$ (see \cite[p.80]{Du}), where $h$ and $g$ are analytic in $\mathbb{D}$.
 Without loss of generality,
 we may assume that the sequence $\{ f_{n}\}$ itself converges to $f$. Moreover,
 it is not difficult to know that $h_{n}\rightarrow h$ and $g_{n}\rightarrow g$ as $n\rightarrow+\infty$ (cf. \cite[p.82]{Du} ).
Consequently,

\beqq |f(0)|+\Lambda_{f}(z)\omega\left(\chi(|z|)\right)=\lim_{n\rightarrow+\infty}\left\{ |f_n(0)|+\Lambda_{f_{n}}(z)\omega\left(\chi(|z|)\right)\right\}\leq1, \eeqq
which implies that $f\in\mathscr{B}_{H_{\omega}}^{\alpha,\beta}$ with $\|f\|_{\mathscr{B}_{H_{\omega}}^{\alpha,\beta}}\leq1.$
Since $(1)$ implies $(2)$, we see that
\be\label{com-1}\int_{0}^{1}\frac{|\phi'(r\zeta)|^{2}(1-r)}
{\omega^{2}\left(\chi(|\phi(r\zeta)|)\right)}dr<+\infty\ee a.e. $\zeta\in\mathbb{T}$.
By (\ref{com-1}),
\beq\label{com-2}&&\int_{0}^{1}|(h_{n}-h)'\circ\phi(r\zeta)|^{2}|\phi'(r\zeta)|^{2}(1-r)dr\\ \nonumber
&\leq&\|f_{n}-f\|^{2}_{\mathscr{B}_{H_{\omega}}^{\alpha,\beta}}\int_{0}^{1}\frac{|\phi'(r\zeta)|^{2}(1-r)}{\omega^{2}\left(\chi(|\phi(r\zeta)|)\right)}dr\\
\nonumber
&\leq&4\int_{0}^{1}\frac{|\phi'(r\zeta)|^{2}(1-r)}{\omega^{2}\left(\chi(|\phi(r\zeta)|)\right)}dr
\eeq and the dominated convergence theorem, we have

\beqq
\lim_{n\rightarrow+\infty}\int_{0}^{1}|(h_{n}-h)'\circ\phi(r\zeta)|^{2}|\phi'(r\zeta)|^{2}(1-r)dr=0
\eeqq
a.e. $\zeta\in\mathbb{T}$.
Consequently, (\ref{com-2}) and the dominated convergence theorem again give that

\be\label{com-3}
\lim_{n\rightarrow+\infty}\frac{1}{2\pi}\int_{\mathbb{T}}\left(\int_{0}^{1}|(h_{n}-h)'\circ\phi(r\zeta)|^{2}
|\phi'(r\zeta)|^{2}(1-r)dr\right)^{\frac{p}{2}}|d\zeta|=0.
\ee
It follows from (\ref{1.19-ch}) and (\ref{com-3}) that
\be\label{com-4}\lim_{n\rightarrow+\infty}\frac{1}{2\pi}\int_{\mathbb{T}}|(h_{n}-h)\circ\phi(\zeta)|^{p}|d\zeta|=0,\ee
which yields that $C_{\phi}(h)\in\mathscr{H}_{H}^{p}(\mathbb{D})$
and $C_{\phi}(h_n) \to C_{\phi}(h)$ in $\mathscr{H}_{H}^{p}(\mathbb{D})$
as $n\rightarrow+\infty$.

By a similar proof process of (\ref{com-4}), we have $C_{\phi}(g)\in\mathscr{H}_{H}^{p}(\mathbb{D})$
and $C_{\phi}(g_n) \to C_{\phi}(g)$ in $\mathscr{H}_{H}^{p}(\mathbb{D})$
as $n\rightarrow+\infty$.
Hence $C_{\phi}(f)=C_{\phi}(h)+\overline{C_{\phi}(g)}\in\mathscr{H}_{H}^{p}(\mathbb{D})$
and $C_{\phi}(f_n) \to C_{\phi}(f)$ in $\mathscr{H}_{H}^{p}(\mathbb{D})$
as $n\rightarrow+\infty$.
This completes the proof.
\qed


\subsection{The proof of Theorem \ref{thm-8}}

(1)
Assume that (\ref{PX2-bounded}) holds.
Let $f\in\mathscr{H}_{H}^{p}(\mathbb{D})$.
Since $\mathbb{D}$
is a simply connected domain, we see that $f$ admits the canonical decomposition
$f = h + \overline{g}$, where $h$ and $g$ are analytic in $\mathbb{D}$ with $g(0)=0$.
It follows from \cite[Theorem 2.1]{K-2019} that $h,~g\in\mathscr{H}^{p}(\mathbb{D})$. By Cauchy's integral formula,
we have $$F(z)=\frac{1}{2\pi\,i}\int_{|\zeta|=1}\frac{H(\zeta)}{\zeta-z}d\zeta+
\overline{\frac{1}{2\pi\,i}\int_{|\zeta|=1}\frac{G(\zeta)}{\zeta-z}d\zeta},~z\in\mathbb{D},$$
where $F(z)=f(rz)$, $H(z)=h(rz)$ and $G(z)=g(rz)$ for $r\in[0,1)$. Elementary calculations lead to
$$F_{z}(z)=\frac{1}{2\pi\,i}\int_{|\zeta|=1}\frac{H(\zeta)}{(\zeta-z)^{2}}d\zeta~\mbox{and}~
\overline{F_{\overline{z}}(z)}=\frac{1}{2\pi\,i}\int_{|\zeta|=1}\frac{G(\zeta)}{(\zeta-z)^{2}}d\zeta.$$
Since $p\in (1,\infty)$, using Jensen's inequality, we obtain
\beqq
|F_{z}(z)|^{p}&\leq&\frac{1}{(1-|z|^{2})^{p}}
\left(\frac{1}{2\pi}\int_{|\zeta|=1}\frac{(1-|z|^{2})}{|\zeta-z|^{2}}|H(\zeta)||d\zeta|\right)^{p}\\
&\leq&\frac{1}{(1-|z|^{2})^{p}}\frac{1}{2\pi}\int_{|\zeta|=1}\frac{(1-|z|^{2})}{|\zeta-z|^{2}}|H(\zeta)|^{p}|d\zeta|\\
&\leq&\frac{(1+|z|)^{2}}{(1-|z|^{2})^{p+1}}\frac{1}{2\pi}\int_{|\zeta|=1}|H(\zeta)|^{p}|d\zeta|,
\eeqq which, together with letting $r\rightarrow1^{-}$, implies that
\be\label{eq-chz-p1}|h'(z)|^{p}\leq\frac{4}{(1-|z|^{2})^{p+1}}\|h\|_{p}^{p}.
\ee Similarly, we conclude
\be\label{eq-chz-p2}|g'(z)|^{p}\leq\frac{4}{(1-|z|^{2})^{p+1}}\|g\|_{p}^{p}.
\ee By (\ref{eq-chz-p1}), (\ref{eq-chz-p2}) and \cite[Theorem 2.1]{K-2019}, we have \be \label{growth-Lambda}
\Lambda_f(z)=|h'(z)|+|g'(z)|\leq \frac{4^{\frac{1}{p}}(\| h\|_p+\|
g\|_p)}{(1-|z|^2)^{1+\frac{1}{p}}} \leq
C\frac{1}{(1-|z|^2)^{1+\frac{1}{p}}}\| f\|_p, \ee where $C$ is a
constant which depends only on $p$. Combining this inequality with
(\ref{PX2-bounded}) implies that
$C_{\phi}:~\mathscr{H}_{H}^{p}(\mathbb{D})\rightarrow\mathscr{B}_{H_{\omega}}^{\alpha,
\beta}$ is bounded.

Conversely, assume that
$C_{\phi}:~\mathscr{H}_{H}^{p}(\mathbb{D})\rightarrow\mathscr{B}_{H_{\omega}}^{\alpha, \beta}$ is bounded.
For $a\in \mathbb{D}$,
let
\[
f_a(z)=\left(\frac{1-|\phi(a)|^2}{(1-\overline{\phi(a)}z)^2}\right)^{\frac{1}{p}},
\quad
z\in \mathbb{D}.
\]
Then $\sup_{a\in\mathbb{D}}\| f_a\|_p<+\infty$
and the boundedness of $C_{\phi}$
implies that
\[
\sup_{a\in \mathbb{D}}\left\{\frac{|\phi(a)| |\phi'(a)|\omega\left(\left(1-|a|^{2}\right)^{\alpha}
\left(\log \frac{e}{1-|a|^{2}}\right)^{\beta}\right)}{\big(1-|\phi(a)|^{2}\big)^{1+\frac{1}{p}}}\right\}<+\infty.
\]
If (\ref{PX2-bounded}) does not hold,
then there exists a sequence $\{ a_n\}$ in $\mathbb{D}$ such that
\[
\lim_{n\to +\infty}\frac{ |\phi'(a_n)|\omega\left(\left(1-|a_n|^{2}\right)^{\alpha}
\left(\log \frac{e}{1-|a_n|^{2}}\right)^{\beta}\right)}{\big(1-|\phi(a_n)|^{2}\big)^{1+\frac{1}{p}}}=+\infty
\]
and $\phi(a_n)\to 0$ as $n\to +\infty$.
Since $\omega$ is a majorant with
\[
\lim_{t\rightarrow0^{+}}(\omega(t)/t)<+\infty,
\]
this implies that
\[
\lim_{n\to +\infty}\frac{ |\phi'(a_n)|\left(1-|a_n|^{2}\right)^{\alpha}
\left(\log \frac{e}{1-|a_n|^{2}}\right)^{\beta}}{\big(1-|\phi(a_n)|^{2}\big)^{1+\frac{1}{p}}}=+\infty.
\]
Since $\phi(a_n) \to 0$ as  $n\to +\infty$ and $|\phi'(a_n)|(1-|a_n|^2)\leq 1$ for all $n\geq 1$,
this is a contradiction
when $\alpha=1$ and $\beta\leq 0$ or $\alpha>1$ and $\beta \in \mathbb{R}$.
Thus, (\ref{PX2-bounded}) holds.

(2)
Assume that (\ref{PX2-compact}) holds.
Then, using that $\omega$ is a majorant with
\[
\lim_{t\rightarrow0^{+}}(\omega(t)/t)<+\infty,
\]
it follows that
$C_{\phi}:~\mathscr{H}_{H}^{p}(\mathbb{D})\rightarrow\mathscr{B}_{H_{\omega}}^{\alpha, \beta}$ is bounded.
We will show that if $\{ f_n\}$ is a bounded sequence in $\mathscr{H}_{H}^{p}(\mathbb{D})$
which converges to $0$ uniformly on compact subsets of $\mathbb{D}$,
then $C_{\phi}(f_n) \to 0$ in $\mathscr{B}_{H_{\omega}}^{\alpha, \beta}$ as $n\to +\infty$.
We may assume that $\| f_n\|_p\leq 1$ for all $n\geq 1$.
Let $\varepsilon>0$ be fixed.
Then there exists an $r\in (0,1)$
such that
\be\label{chz-p3}
\frac{|\phi'(z)|\omega\left(\left(1-|z|^{2}\right)^{\alpha}
\left(\log \frac{e}{1-|z|^{2}}\right)^{\beta}\right)}{\big(1-|\phi(z)|^{2}\big)^{1+\frac{1}{p}}}<\frac{\varepsilon}{C}
\ee
for all $z\in \mathbb{D}$ with $|\phi(z)|>r$,
where $C$ is the constant in (\ref{growth-Lambda}).
Combining 
(\ref{growth-Lambda}) and (\ref{chz-p3}) gives
\[
\Lambda_{C_{\phi}(f_n)}(z)\omega\left(\left(1-|z|^{2}\right)^{\alpha}
\left(\log \frac{e}{1-|z|^{2}}\right)^{\beta}\right)
<
\varepsilon
\]
for all $z\in \mathbb{D}$ with $|\phi(z)|>r$ and for all $n\geq 1$.
Since $\{ f_n\}$ converges to $0$ uniformly on compact subsets of $\mathbb{D}$, by using an argument similar to that in (1), we see that
\[
\Lambda_{C_{\phi}(f_n)}(z)\omega\left(\left(1-|z|^{2}\right)^{\alpha}
\left(\log \frac{e}{1-|z|^{2}}\right)^{\beta}\right)
\to 0,
\quad
\mbox{as }
n\to +\infty
\]
uniformly on $|\phi(z)|\leq r$.
Also, $C_{\phi}(f_n)(0) \to 0$ as $n\to +\infty$.
Thus, $C_{\phi}(f_n) \to 0$ in $\mathscr{B}_{H_{\omega}}^{\alpha, \beta}$ as $n\to +\infty$.

Conversely, assume that
$C_{\phi}:~\mathscr{H}_{H}^{p}(\mathbb{D})\rightarrow\mathscr{B}_{H_{\omega}}^{\alpha, \beta}$ is compact.
If (\ref{PX2-compact}) does not hold, then
there exist a constant $\varepsilon_0>0$ and a sequence $\{ a_n\}$ in $\mathbb{D}$
such that $|\phi(a_n)| \to 1$  as $n\to +\infty$ and
\[
\frac{|\phi'(a_n)|\omega\left(\left(1-|a_n|^{2}\right)^{\alpha}
\left(\log \frac{e}{1-|a_n|^{2}}\right)^{\beta}\right)}{\big(1-|\phi(a_n)|^{2}\big)^{1+\frac{1}{p}}}
>\varepsilon_0
\]
for all $n\geq 1$.
We may assume that $a_n\to \mathbb{T}$ and $\phi(a_n) \to b\in \mathbb{T}$
as $n\to +\infty$.
Let
\[
f_n(z)=\left(\frac{1-|\phi(a_n)|^2}{(1-\overline{\phi(a_n)}z)^2}\right)^{\frac{1}{p}},
\quad
z\in \mathbb{D}.
\]
Then $\{ f_n\}$ is a bounded sequence in $\mathscr{H}_{H}^{p}(\mathbb{D})$
which converges to $0$ uniformly on compact subsets of $\mathbb{D}$
as $n\to +\infty$.
On the other hand,
\begin{eqnarray*}
\| C_{\phi}(f_n)\|_{\mathscr{B}_{H_{\omega,p}}^{\alpha,\beta}}
&\geq&
|\phi(a_n)|\frac{|\phi'(a_n)|\omega\left(\left(1-|a_n|^{2}\right)^{\alpha}
\left(\log \frac{e}{1-|a_n|^{2}}\right)^{\beta}\right)}{\big(1-|\phi(a_n)|^{2}\big)^{1+\frac{1}{p}}}
\\
&>&
|\phi(a_n)| \varepsilon_0
\end{eqnarray*}
for all $n\geq 1$.
This contradicts with the compactness of $C_{\phi}$.
Thus, (\ref{PX2-compact}) holds.
This completes the proof.
\qed



\section{Acknowledgments}
 The research of the first author was partly supported by the National Science
Foundation of China (grant no. 12071116), the Hunan Natural Science
outstanding youth fund project, the Key Projects of Hunan Provincial
Department of Education (grant no. 21A0429);
 the Double First-Class University Project of Hunan Province
(Xiangjiaotong [2018]469),  the Science and Technology Plan Project of Hunan
Province (2016TP1020), and the Discipline Special Research Projects of Hengyang Normal University (XKZX21002);
The research of the second author was partly supported by
JSPS KAKENHI Grant
Number JP22K03363;
The research of the third author was partly supported by the National Science
Foundation of China (grant nos.
 11501220 and 11971182), and Fujian Natural Science Foundation (grant nos. 2021J01304 and 2019J0101).

\end{document}